\newcommand{\D}{\mathbb{D}} 
\newcommand{\C}{\mathbb{C}} 
\newcommand{\N}{\mathbb{N}} 
\newcommand{\T}{\mathbb{T}}
\newcommand{\Hi}{{\mathcal{H}}^\infty} 
\newcommand{\Ho}{{\mathcal{H}}^1} 
\newcommand{\Ht}{{\mathcal{H}}^2}
\newcommand{\Hp}{{\mathcal{H}}^p}
\newcommand{\fin}{{\operatorname{fin}}}
\theoremstyle{theorem} 
\newtheorem{theorem}{Theorem}[section] 
\newtheorem{lemma}[theorem]{Lemma} 
\newtheorem{corollary}[theorem]{Corollary}
\theoremstyle{definition} 
\newtheorem*{definition}{Definition}
\theoremstyle{remark}
\newtheorem*{conj}{Conjecture}
\DeclareMathOperator{\mre}{Re} \DeclareMathOperator{\mim}{Im}
\begin{document}

\author{Andriy Bondarenko}

\address{Andriy Bondarenko\\
Department of Mathematical Sciences \\
Norwegian University of Science and Technology \\
NO-7491 Trondheim \\
Norway} \email{andriybond@gmail.com}

\author{Ole Fredrik Brevig} \address{Ole Fredrik Brevig \\
Department of Mathematical Sciences \\
Norwegian University of Science and Technology \\
NO-7491 Trondheim \\
Norway} \email{ole.brevig@math.ntnu.no}

\author{Eero Saksman} \address{Eero Saksman \\ Department of Mathematical Sciences \\
Norwegian University of Science and Technology \\
NO-7491 Trondheim \\ Norway \\ and
Department of Mathematics and Statistics \\
University of Helsinki \\
FI-00170 Helsinki \\
Finland} \email{eero.saksman@helsinki.fi}

\author{Kristian Seip} \address{Kristian Seip\\Department of Mathematical Sciences \\
Norwegian University of Science and Technology \\
NO-7491 Trondheim \\
Norway} \email{seip@math.ntnu.no}

\thanks{Saksman's research was supported in part by the Lars Onsager Professorship at NTNU and in part by the Finnish Academy CoE ``Analysis and Dynamics''. The research of Bondarenko, Brevig, and Seip was supported in part by Grant 227768 of the Research Council of Norway. }

\title{Linear space properties of $H^p$ spaces of Dirichlet series}

\begin{abstract}
	We study $H^p$ spaces of Dirichlet series, called $\mathcal{H}^p$, for the range $0<p< \infty$. We begin by showing that two natural ways to define $\mathcal{H}^p$ coincide. We then proceed to study some linear space properties of $\mathcal{H}^p$.
More specifically, we study linear functionals generated by fractional primitives of the Riemann zeta function; our estimates rely on certain Hardy--Littlewood inequalities and display an interesting phenomenon, called  contractive symmetry between $\mathcal{H}^p$ and $\mathcal{H}^{4/p}$, contrasting the usual $L^p$ duality. We next deduce general coefficient estimates, based on an interplay between the multiplicative structure of $\mathcal{H}^p$ and certain new one variable bounds. Finally, we deduce general estimates for the norm of the partial sum operator $\sum_{n=1}^\infty a_n n^{-s}\mapsto \sum_{n=1}^N a_n n^{-s}$ on $\Hp$ with $0< p \le 1$, supplementing a classical result of Helson for the range $1<p<\infty$. The results for the coefficient estimates and for the partial sum operator exhibit the traditional schism between the ranges $1\le p \le \infty$ and $0<p<1$.
\end{abstract}
\maketitle

\section{Introduction}

$H^p$ spaces of Dirichlet series, to be called $\mathcal{H}^p$ in what follows, have received considerable attention in recent years but mostly in the Banach space case $1\leq p <\infty$.  In the present paper, we explore $\mathcal{H}^p$ in the full range $0<p<\infty$, which in part can be given a number theoretic motivation: the interplay between the additive and multiplicative structure of the integers is displayed in a more transparent way by the results obtained without any a priori restriction on the exponent $p$. 

The emerging theory of $\mathcal{H}^p$ differs in many aspects  from that of the classical Hardy spaces. Unforeseen phenomena appear, some related to the complicated structure of the dual of $\mathcal{H}^p$ and others arising from number theory. In the present paper, we set out to study some of the most classical questions related to the linear space structure of $\mathcal{H}^p$; this will again lead us to some of the interesting features of $\Hp$ not encountered in the classical setting, and consequently our results shed new light on them.  

By a basic observation of Bohr, the multiplicative structure of the integers allows us to view an ordinary Dirichlet series of the form
\[ f(s)=\sum_{n=1}^\infty a_n n^{-s} \]
as a function of infinitely many variables. Indeed, by the transformation $z_j=p_j^{-s}$ (here $p_j$ is the $j$th prime number) and the fundamental theorem of arithmetic, we have the Bohr correspondence, 
\begin{equation}\label{eq:bohr} 
	f(s):= \sum_{n=1}^\infty a_{n} n^{-s}\quad\longleftrightarrow\quad \mathcal{B}f(z):=\sum_{n=1}^{\infty} a_n z^{\kappa(n)}, 
\end{equation}
where we use multi-index notation and $\kappa(n)=(\kappa_1,\ldots,\kappa_j,0,0,\ldots)$ is the multi-index such that $n = p_1^{\kappa_1} \cdots p_j^{\kappa_j}$. This transformation---the so-called Bohr lift---gives an isometric isomorphism between $\Hp$ and the Hardy space $H^p(\D^\infty)$. 

We start by showing in  Section \ref{se:prel}  that the Bohr lift allows for  a canonical definition of $\Hp$ in the full range $0<p<\infty$, and this definition agrees with the natural one obtained by asking ``der $m$te Abschnitt'' to lie uniformly in  $H^p(\D^m)$. We have chosen to be quite detailed in this groundwork, because the infinite-dimensional situation and the non-convexity of the $L^p$ quasi-norms for $0<p<1$ require some extra care. At the end of the section, we also summarize briefly some known facts and easy consequences, such as for instance how some results for $\Ht$ can be transferred to $\Hp$ when either $p=2k$ or $p=1/(2k)$ for $k=2,3,..$. 

In  Section~\ref{sec:HL} we investigate certain linear functionals generated by fractional primitives of the Riemann zeta function. We want to characterize when they belong to a given $\mathcal{H}^p$ space or the dual.  We will see that in this situation the standard duality relation is replaced what we call contractive symmetry between $\mathcal{H}^p$ and $\mathcal{H}^{4/p}$. We refer to these estimates as multiplicative because they seem to arise in multiplicative or almost multiplicative situations. An example of this was given already in \cite{MS11} where it was observed that if $\varphi$ is a Dirichlet series in the dual of $\mathcal{H}^1$ with multiplicative coefficients, then $\varphi\in \mathcal{H}^4$ and $4$ is the largest possible exponent in general. The contractive symmetry is displayed strongly in the results of this section since the fractional primitives of the Riemann zeta studied here are in some sense almost multiplicative. We note in passing that, surprisingly, there remain basic problems related to the contractive symmetry that are still open in the case of the unit disc (see \cite{BOSZ}).

In Section \ref{se:coeff} we investigate individual coefficient estimates, which are of special interest only in the case $0<p<1$. The estimate for the coefficient $a_n$ (in front of $n^{-s}$) will depend solely on  the multiplicative structure of $n$. We will combine this observation with new one variable bounds in order to exhibit nontrivial estimates for the maximal order in terms of the size of $n$. 

The additive structure of the integers plays a role whenever we restrict attention to the properties of $f(s)$ viewed as an analytic function in a half-plane or when we consider any problem for which the order of summation matters. A particularly interesting example is that of the partial sum operator
\[ S_N f(s):=\sum_{n=1}^N a_n n^{-s}, \]
viewed as an operator on $\Hp$. By a classical theorem of Helson \cite{H1}, we know that it is uniformly bounded on $\Hp$ when $1<p<\infty$. In Section~\ref{se:partial}, we will give bounds that are essentially best possible in the range $0<p<1$ and an improvement by a factor $1/\log\log N$ on the previously known bounds when $p=1$. We are however still far from knowing the precise asymptotics of the norm of $S_N$ when it acts on either $\Ho$ or $\Hi$. 

To close this introduction, we note that there are many questions about $\Hp$ that are not treated or only briefly mentioned in our paper.  For further information about known results and open problems, we refer to the monograph \cite{QQ13} and the recent paper \cite{SS17}. Finally, we note that  a closely related paper \cite{BBSSZ} addresses a number theoretic problem that deals with the \emph{interplay} between the additive and multiplicative structure of the integers, namely the computation of what are known as the pseudomoments of the Riemann zeta function.

\subsection*{Notation} We will use the notation $f(x) \lesssim g(x)$ if there is some constant $C>0$ such that $|f(x)|\leq C|g(x)|$ for all (appropriate) $x$. If we have both $f(x) \lesssim g(x)$ and $g(x) \lesssim f(x)$, we will write $f(x)\simeq g(x)$. As above, the increasing sequence of prime numbers will be denoted by $(p_j)_{j\geq1}$, and the subscript will sometimes be dropped when there can be no confusion. The number of prime factors in $n$ will be denoted by $\Omega(n)$ (counting multiplicities). We will also use the standard notations $\lfloor x \rfloor=\max\{n\in \mathbb{N}: n\le x \}$ and  $\lceil x \rceil=\min\{n\in \mathbb{N}: n\ge x \}$.


\section{Definitions and basic properties of $\Hp$ and $H^p(\D^\infty)$}\label{se:prel}

\subsection{Definition of $H^p(\D^\infty)$} We use the standard notation $\T:=\{ z\,:\,|z|=1\}$ for the unit circle which is the boundary of the unit disc $\D:=\{z\,:\, |z|<1\}$ in the complex plane, and we equip $\mathbb{T}$ with normalized one-dimensional Lebesgue measure $\mu$ so that $\mu(\T)=1.$ We write $\mu_d:=\mu\times \cdots \times \mu$ for the product of $d$ copies of $\mu$, where $d$ may belong to $\N\cup\{\infty\}.$

We begin by recalling that for every $p>0$, the classical Hardy space $H^p(\D)$ (also denoted by $H^p(\T)$) consists of analytic functions $f:\D\to\C$ such that
\[ \|f\|_{H^p(\D)}^p:=\sup_{0<r<1}\int_\T |f(rz)|^p\,d\mu(z) <\infty. \]
This is a Banach space (quasi-Banach in case $0<p<1$), and polynomials are dense in $H^p(\D)$, so it could as well be defined as the closure of all polynomials in the above norm (or quasi-norm). We refer to \cite{Duren} or the first chapters of \cite{G} for the definition and basic properties of the Hardy spaces on $\D$.

For the finite dimensional polydisc $\D^d$ with $d\geq 2$, the definition of Hardy spaces can be made in a similar manner: For every $p>0$, a function $f:\D^d\to\C$ belongs to $H^p(\D^d)$ when it is analytic separately with respect to each of the variables $z_1,\ldots, z_d$ and
\[ \|f\|_{H^p(\D^d)}^p:=\sup_{r<1}\int_{\T^d} |f(rz)|^p\,d\mu_d(z) <\infty. \]
The standard source for these spaces is Rudin's monograph \cite{Ru}. As in the one-dimensional case, for almost every $z$ in $\mathbb{T}^d$, the radial boundary limit
\[f^*(z):=\lim_{r\to 1^-} f(rz)\]
exists, and we may write 
\begin{equation}\label{eq:01} 
	\| f\|_{H^p(\D^d)}^p=\int_{\T^d} |f^*(z)|^p\,d\mu_d(z). 
\end{equation}
This means that $H^p(\mathbb{D}^d)$ is a subspace of $L^p(\mathbb{T}^d,\mu_d)$. Moreover, again as in the one-dimensional case, for every $f$ in $H^p(\D^d)$, we have that 
\begin{equation}\label{eq:02} 
	\lim_{r\to 1^{-1}} \|f-f_r\|_{H^p(\D^d)}=0, 
\end{equation}
where $f_r(z):=f(rz)$. This implies that the polynomials are dense in $H^p(\D^d)$, so that the space could equally well be defined as the closure of all polynomials with respect to the norm on the boundary given by \eqref{eq:01}.

Both \eqref{eq:01} and \eqref{eq:02} are most conveniently obtained by applying the $L^p$-boundedness of the radial maximal function on $H^p(\D^d)$ for all $p>0$, a result which can be obtained by considering a dummy variable $w$ in $\D$ and checking first that, given $f$ in $H^p(\D^d)$, the function
\[ w\mapsto f(wz_1,\ldots , wz_d) \]
lies in $H^p(\D^d)$ for almost every $(z_1,\ldots z_d)\in \T^d$. By Fubini's theorem, the boundedness of the maximal function then reduces to the classical one-dimensional estimate.

In order to define $H^p(\D^\infty)$, some extra care is needed because functions in $H^p(\D^\infty)$ will in general not be well defined in the whole set $\D^\infty$. To keep things simple, we henceforth consider the set $\D^\infty_\fin$ which consists of elements $z=(z_j)_{j\geq 1}\in\D^\infty$ such that $z_j\neq 0$ only for finitely many $k$. A function $f:\D^\infty_\fin\to\C$ is analytic if it is analytic at every point $z$ in $\D^\infty_\fin$ separately with respect to each variable. Obviously any analytic $f:\D^\infty_\fin\to\C$ can be written by a convergent Taylor series
\[f(z)=\sum_{\kappa\in \N^\infty_\fin}c_\kappa z^\kappa,\quad z\in \D^\infty_\fin,\]
and the coefficients $c_\kappa$ determine $f$ uniquely. The truncation $A_mf$ of $f$ onto the first $m$ variables $A_mf$ (called ``der $m$te Abschnitt'' by Bohr) is defined as
\[A_mf(z_1,z_2,\ldots)=f(z_1,\ldots , z_m,0,0,\ldots)\]
for every $z$ in $\D^\infty_\fin$. By applying the fundamental estimate $|g(0)|\leq \|g\|_{H^p(\D^d)}$, obtained by iterating the case $d=1$, we deduce that 
\begin{equation}\label{eq:03} 
	\| A_mf\|_{H^p(\D^m)}\leq \| A_{m'}f\|_{H^p(\D^{m'})} 
\end{equation}
whenever $m'\geq m$. 
\begin{definition}
	Let $p>0$. The space $H^p(\D^\infty)$ is the space of analytic functions on $\D^\infty_\fin$ obtained by taking the closure of all polynomials in the norm (quasi-norm for \ $0<p<1$)
	\[\|f\|_{H^p(\D^\infty)}^p :=\int_{\T^\infty}|f(z)|^p\,d\mu_\infty (z).\]
\end{definition}
Fix a compact set $K$ in $\D^d$ and embed it as the subset $\widetilde K$ of $ \D^\infty$ so that
\[ \widetilde K:= \left\{ z=(z_1,\ldots, z_d, 0,0, \ldots)\in \D^\infty\,:\, \ (z_1,\ldots,z_d)\subset K\right\}. \]
For all polynomials $g$ we clearly have $\sup_{z\in\widetilde K}|g(z)|\leq C_K\|g\|_{H^p(\D^\infty)}$. It follows that any limit of polynomials is analytic on $\D^\infty_\fin$, whence $H^p(\D^\infty)$ is well defined. This also implies that every element $f$ in $H^p(\D^\infty)$ has a well-defined Taylor series $f(z) = \sum_{\kappa} c_\kappa z^\kappa$ and, in turn, this Taylor series determines $f$ uniquely. Namely, by recalling \eqref{eq:03}, we have that $A_mf$ is in $H^p(\D^m)$ for every $m\geq 1$ and the $A_m f$ are certainly determined by the Taylor series. Finally, by polynomial approximation, it follows that
\[\lim_{m\to\infty} \| f- A_mf\|_{H^p(\D^\infty)}=0.\]
Obviously, if a function $f$ in $H^p(\D^\infty)$ depends only on the variables $z_1,\ldots z_d$, then we have $\|f\|_{H^p(\D^\infty)}=\|f\|_{H^p(\D^d)}$.

Cole and Gamelin \cite{CG} established an optimal estimate for point evaluations on $H^p(\D^\infty)$ by showing that 
\begin{equation}\label{eq:CGestimate} 
	|f(z)|\leq \left(\prod_{j=1}^\infty \frac{1}{1-|z_j|^2}\right)^{1/p}\|f\|_{H^p(\mathbb{D}^\infty)}. 
\end{equation}
Thus the elements in the Hardy spaces continue analytically to the set $\D^\infty \cap \ell^2$.

If $f$ is an integrable function (or a Borel measure) on $\T^\infty$, then we denote its Fourier coefficients by
\[\widehat f(\kappa):=\int_{\T^\infty}f(z) \overline{z^\kappa}\, d\mu_\infty(z)\]
for multi-indices $\kappa$ in $\mathbb{Z}^\infty_{\fin}$. When $p\geq 1$, it follows directly from the definition of $H^p(\D^\infty)$ that it can be identified as the analytic subspace of $L^p(\T^\infty)$, consisting of the elements in $L^p(\T^\infty)$ whose non-zero Fourier coefficients lie in the positive cone $\N^\infty_{\fin}$ (called the ``narrow cone'' by Helson \cite{Helson06}).

The following result verifies that, alternatively, $H^p(\D^\infty)$ may be defined in terms of the uniform boundedness of the $L^p$-norm of the sequence $A_m f$ for $m\geq 1$, and the functions $A_m f$ approximate $f$ in the norm of $H^p(\D^\infty)$. 
\begin{theorem}\label{th:Abschnitte} 
	Suppose that $0<p<\infty$ and that $f$ is a formal infinite dimensional Taylor series. Then $f$ is in $H^p(\D^\infty)$ if and only if 
	\begin{equation}\label{eq:mart} 
		\sup_{m\geq 1}\| A_mf\|_{H^p(\D^m)}<\infty. 
	\end{equation}
	Moreover, for every $f$ in $H^p(\D^\infty)$, it holds that $\|A_mf- f\|_{H^p(\D^\infty)}\to 0$ as $m\to\infty.$ 
\end{theorem}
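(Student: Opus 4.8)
The plan is to prove the easy implication by a contraction property and the hard implication through a martingale argument on the probability space $(\T^\infty,\mu_\infty)$. First I would record that the truncation $A_m$ is a contraction on $H^p(\D^\infty)$: for a polynomial $g$ depending on $z_1,\dots,z_M$ one has $\|A_mg\|_{H^p(\D^m)}\le\|g\|_{H^p(\D^M)}=\|g\|_{H^p(\D^\infty)}$ by \eqref{eq:03}, and this extends to all of $H^p(\D^\infty)$ by density. Hence if $f\in H^p(\D^\infty)$ then $\sup_m\|A_mf\|_{H^p(\D^m)}\le\|f\|_{H^p(\D^\infty)}<\infty$, and the convergence $\|A_mf-f\|_{H^p(\D^\infty)}\to0$ is exactly the polynomial-approximation statement already established before the theorem. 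For the converse I would view each $A_mf$ as a function on $\T^\infty$ depending only on the first $m$ coordinates and observe that, since integrating out the tail coordinates annihilates every monomial $z^\kappa$ with $\mathrm{supp}\,\kappa\not\subseteq\{1,\dots,m\}$, one has $A_mf=\mathbb{E}[A_{m'}f\mid\mathcal{F}_m]$ for $m'\ge m$, where $\mathcal{F}_m=\sigma(z_1,\dots,z_m)$. Thus $(A_mf)_{m\ge1}$ is a martingale and \eqref{eq:mart} says precisely that it is bounded in $L^p(\T^\infty)$.

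The next step, valid for all $0<p<\infty$, is to pass to the subharmonic submartingale. Fixing the first $m$ coordinates and regarding $A_{m+1}f$ as an analytic function of $z_{m+1}$, the function $|A_{m+1}f|^p$ is subharmonic in $z_{m+1}$, so its value at $z_{m+1}=0$, which is $A_mf$, is dominated by its mean over the circle; integrating out the remaining coordinates yields $|A_mf|^p\le\mathbb{E}[|A_{m+1}f|^p\mid\mathcal{F}_m]$. Hence $(|A_mf|^p)_{m\ge1}$ is a nonnegative submartingale, bounded in $L^1$ by \eqref{eq:mart}, and Doob's submartingale convergence theorem gives that $|A_mf|^p$ converges almost everywhere; in particular $f^\ast:=\sup_m|A_mf|$ is finite almost everywhere.

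For $1<p<\infty$ the conclusion is then quick: since $(A_mf)$ is a martingale bounded in $L^p$, Doob's theorem yields an $F\in L^p(\T^\infty)$ with $A_mf\to F$ both almost everywhere and in $L^p$, while Doob's maximal inequality gives $f^\ast\in L^p$. Since the Fourier coefficients of $F$ are supported in $\N^\infty_\fin$, $F$ lies in the analytic subspace $H^p(\D^\infty)$; applying the contraction $A_m$ to the relation $A_{m'}f\to F$ shows $A_mF=A_mf$, so $F$ has Taylor series $f$, and $A_mf\to F$ is the asserted convergence. This limit identification via continuity of $A_m$ works verbatim in every range, so the whole matter reduces to producing the $L^p$ limit.

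The main obstacle is the range $0<p\le1$, where $(A_mf)$ need not be bounded in $L^1$ and Doob's maximal inequality is false, so the analytic structure must be exploited throughout. Here the plan is to establish a dimension-free Hardy-space maximal inequality $\|f^\ast\|_{L^p(\T^\infty)}\le C_p\,\sup_m\|A_mf\|_{H^p(\D^m)}$, built from the $L^p$-boundedness of the radial maximal function on $H^p(\D^d)$ for all $p>0$ (whose constant, by the Fubini and dummy-variable reduction to one variable recalled in the excerpt, does not depend on the dimension) together with the subharmonic submartingale above. Granting $f^\ast\in L^p$, the family $(|A_mf|^p)$ is dominated by $(f^\ast)^p\in L^1$, hence uniformly integrable, and combined with the almost everywhere convergence of $A_mf$—which I would extract from the same analytic and maximal ingredients via a stopping-time reduction to bounded martingales—dominated convergence produces $A_mf\to F$ in $L^p$; the limit is then identified as above. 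I expect the genuine difficulty to lie exactly in this maximal inequality and in the almost everywhere convergence of the complex martingale in the non-locally-convex regime $p<1$, precisely because the naive martingale estimates break down and one is forced to use subharmonicity of $|A_mf|^p$ in place of convexity.
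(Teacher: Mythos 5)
Your easy direction and your treatment of $1<p<\infty$ are correct and coincide with the paper's (the martingale observation plus Doob). The problem is the range $0<p\le 1$, which is the entire point of the theorem and the only case requiring a new argument: there your text is a plan, not a proof, and the plan rests on two claims that you never establish and that do not follow from the ingredients you cite. First, the maximal inequality $\|\sup_m|A_mf|\|_{L^p(\T^\infty)}\lesssim \sup_m\|A_mf\|_{H^p(\D^m)}$ for $p<1$: the radial maximal function bound recalled in the paper controls $\sup_{r<1}|f(rz)|$, i.e.\ dilations along a ray, whereas $A_mf(z)=f(z_1,\dots,z_m,0,0,\dots)$ evaluates $f$ at points that lie on no common ray, so the dummy-variable/Fubini reduction gives you nothing about $\sup_m|A_mf|$; and the purely probabilistic route is closed, since Doob's maximal inequality is false for $L^p$-bounded (sub)martingales when $p\le 1$. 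Second, the a.e.\ convergence of $A_mf$: Doob's submartingale convergence theorem applied to $|A_mf|^p$ only gives a.e.\ convergence of the \emph{moduli}, which says nothing about the phases, and your proposed ``stopping-time reduction to bounded martingales'' is not carried out. Since the uniform-integrability/dominated-convergence step needs exactly these two unproven facts, the hard implication is not proved; you have essentially relocated the difficulty and labelled it as such.

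The paper's actual argument avoids maximal functions and pointwise convergence altogether. Its key tool is a quantitative one-variable estimate (Lemma~\ref{le:4}): for $0<p<1$ and a polynomial $f$ on $\T$, $\|f-f(0)\|_p^p\le C_p\bigl(\delta+|f(0)|^{p-p^2/2}\delta^{p/2}\bigr)$ with $\delta=\|f\|_p^p-|f(0)|^p$, proved via inner--outer factorization and the weak-type $L^1$ bound for the conjugation operator. Applying this in the ``new'' variables and integrating (Lemma~\ref{pr:1}) converts it into a bound for $\|A_{m+k}g-A_mg\|_p^p$ in terms of the increment $\|A_{m+k}g\|_p^p-\|A_mg\|_p^p$. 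Since \eqref{eq:03} makes $m\mapsto\|A_mf\|_p^p$ increasing, the hypothesis \eqref{eq:mart} forces these increments to tend to $0$ uniformly in $k$, so $(A_mf)$ is a Cauchy sequence in the complete space $H^p(\D^\infty)$; its limit is identified with $f$ through the Taylor coefficients, exactly as in your limit-identification step (which is the one part of your $p<1$ discussion that is sound). If you want to salvage your approach, you would need to first prove your maximal inequality and a.e.\ convergence for $p<1$ — both of which are substantially harder than, and not known independently of, the theorem itself — whereas the Cauchy-sequence route needs only the one-variable lemma.
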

\begin{proof}
	[Proof for the case $p\ge 1$] When $p>1$, the statements follow from the fact that $(A_mf)_{m\geq 1}$ is obviously an $L^p$-martingale sequence with respect to the natural sigma-algebras. It follows in particular that there is an $L^p$ limit function (still denoted by $f$) of the sequence $A_mf$ on the distinguished boundary $\T^\infty$, which has the right Fourier series, and the density of polynomials follows immediately from the finite-dimensional approximation. In the case $p=1$, this fact is stated in \cite[Cor.~3]{AOS2}, and is derived as consequence of the infinite-dimensional version of the brothers Riesz theorem on the absolute continuity of analytic measures, due to Helson and Lowdenslager \cite{HL} (a simpler proof of the result from \cite{HL} is also contained in \cite{AOS2}). The approximation property of the $A_mf$ then follows easily. 
\end{proof}
The case $0<p<1$ requires a new argument and will be presented in the next subsection.

\subsection{Proof of Theorem \ref{th:Abschnitte} for $0<p<1$}\label{se:two} Our aim is to prove Lemma~\ref{pr:1} below, from which the claim will follow easily. In an effort to make the computations of this section more readable, we temporarily adopt the convention that $\|f\|_{L^p(\mathbb{T}^d)} = \|f\|_p$, where it should be clear from the context what $d$ is. We start with the following basic estimate. 
\begin{lemma}\label{le:4} 
	Let $0<p<1$. There is a constant $C_p<\infty$ such that all (analytic) polynomials $f$ on $\mathbb{T}$ satisfy the inequality 
	\begin{equation}\label{basicp} 
		\| f-f(0)\|_p^p \le C_p\left( \| f\|_p^p - |f(0)|^p + |f(0)|^{p-p^2/2}\left(\| f\|_p^p - |f(0)|^p\right)^{p/2} \right). 
	\end{equation}
\end{lemma}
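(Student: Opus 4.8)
The plan is to reduce \eqref{basicp} to a transparent normalized statement and then to separate the contributions of the small and the large values of $f-f(0)$.

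First I would normalize. If $f(0)=0$ the right-hand side is just $C_p\|f\|_p^p$ and nothing is to be proved, so assume $f(0)\neq 0$. Multiplying $f$ by a unimodular constant changes neither $|f(0)|$, nor $\|f\|_p$, nor $\|f-f(0)\|_p$, so I may take $f(0)>0$; and since both sides are homogeneous of degree $p$ in $f$ (the exponents $p-p^2/2$ on $|f(0)|$ and $p/2$ on the excess $\|f\|_p^p-|f(0)|^p$ add up to $p$), I may rescale to $f(0)=1$. Writing $f=1+h$ with $h$ an analytic polynomial satisfying $h(0)=0$ and setting
\[ E:=\|f\|_p^p-1=\int_\T\big(|1+h|^p-1\big)\,d\mu, \]
subharmonicity of $|f|^p$ gives $E\ge 0$, and the claim becomes $\|h\|_p^p\le C_p\big(E+E^{p/2}\big)$.

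The range $E\ge 1$ is immediate: the quasi-triangle inequality for $0<p<1$ gives $\|h\|_p^p\le\|1+h\|_p^p+1=E+2\le 3E$. The substance is the regime $0\le E<1$, where I must prove $\|h\|_p^p\lesssim_p E^{p/2}$. I would split $\T$ into $B:=\{|h|\le 1/2\}$ and $G:=\{|h|>1/2\}$ and treat them by different mechanisms. On $G$ the comparison $|1+h|^p\gtrsim_p|h|^p$ holds once $|h|$ exceeds a threshold $M_p$, which bounds $\int_{\{|h|>M_p\}}|h|^p$ by a multiple of $\int_{\{|h|>M_p\}}(|1+h|^p-1)$; the leftover annulus $\{1/2<|h|\le M_p\}$, on which $|h|^p\simeq_p 1$, has to be controlled through the measure of the set where $h$ is of unit order. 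On $B$ I would pass to $L^2$: by Hölder on the probability space $\T$,
\[ \int_B|h|^p\,d\mu\le\Big(\int_B|h|^2\,d\mu\Big)^{p/2}, \]
so it suffices to show $\int_B|h|^2\,d\mu\lesssim_p E$, after which the exponent $p/2$ emerges exactly as in the heuristic $E\approx\tfrac{p^2}{4}\|h\|_2^2$ for small $h$.

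The engine behind $\int_B|h|^2\lesssim_p E$ is the second-order expansion on $B$, where $1+h$ stays away from the origin ($|1+h|\ge 1/2$) and $w\mapsto|w|^p$ is smooth. Writing $u=\Real h$ and $v=\Imag h$, one has $|1+h|^p-1-p\,u=\tfrac{p}{2}v^2-\tfrac{p(1-p)}{2}u^2+O_p(|h|^3)$, and the decisive point is that this indefinite form regroups as $\tfrac{p^2}{4}|h|^2-\tfrac{p(2-p)}{4}\Real(h^2)$. Since $h^2$ is analytic and vanishes at the origin, $\int_\T\Real(h^2)\,d\mu=0$ (equivalently, the conjugate functions satisfy $\|u\|_2=\|v\|_2$), so upon integration the dangerous negative $u^2$ term cancels in the mean and only a positive multiple of $|h|^2$ survives. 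This analytic cancellation, which lets $E$ control $\|h\|_2^2$ in the small regime, is the conceptual heart of the argument.

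The main obstacle I anticipate is that this cancellation is genuinely \emph{global}: the identity $\int_\T\Real(h^2)\,d\mu=0$ mixes $B$ and $G$, while the favorable expansion is available only on $B$. Turning it into the localized bound $\int_B|h|^2\lesssim_p E$ therefore requires careful bookkeeping of the cross-terms between the two regions, and in particular a bound for the ``backward'' set $\{|1+h|<1\}$, on which $|1+h|^p-1$ is negative and hence competes against $E$. Controlling these negative contributions (together with the unit-order annulus left over from the analysis on $G$) by the positive part of $\int_\T(|1+h|^p-1)\,d\mu$, uniformly in $h$, is where the real work lies; once it is done, the two pieces combine to give $\|h\|_p^p\lesssim_p E+E^{p/2}$ and, after undoing the normalization, the stated estimate \eqref{basicp}.
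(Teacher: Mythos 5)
Your soft reductions are fine (the normalization to $f(0)=1$, the trivial regime $E\ge 1$, and the target $\|h\|_p^p\lesssim_p E^{p/2}$ all match the paper's own set-up), but the proof has a genuine gap, located exactly at the step you yourself defer as ``where the real work lies.'' The localized inequality $\int_{\{|h|\le 1/2\}}|h|^2\,d\mu\lesssim_p E$ is never established, and the mechanism you present as the conceptual heart --- the identities $\int_\T h\,d\mu=0$ and $\int_\T\Real(h^2)\,d\mu=0$ combined with the second-order expansion --- provably cannot deliver it. Those two identities use almost none of the analyticity, and for a merely measurable $h$ satisfying both of them your localized claim is false. Take $0<\alpha\le 1/2$ fixed, put $q=e^{i\alpha}-1$, and let $h$ equal $q$ on a set of measure $1-\sigma$ and equal
\[ w_{1,2}\;=\;\frac{q}{\sigma}\Bigl(-(1-\sigma)\pm i\sqrt{1-\sigma}\Bigr) \]
on two sets of measure $\sigma/2$ each. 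A direct computation gives $\int h\,d\mu=\int h^2\,d\mu=0$, while $|1+h|\equiv 1$ on the large set, so the large set contributes nothing to $E$ and $E\simeq \sigma\,(\alpha/\sigma)^p\to 0$ as $\sigma\to 0$; yet $\int_{\{|h|\le 1/2\}}|h|^2\,d\mu\simeq\alpha^2$ stays fixed. (This $h$ grossly violates $\int h^3\,d\mu=0$, which is why it cannot be analytic.) So any completion of your argument must inject analyticity in a far stronger form than the vanishing of the first two moments, and nothing in your outline supplies it. The same defect infects your treatment of $G$: bounding $\int_{\{|h|>M_p\}}\bigl(|1+h|^p-1\bigr)\,d\mu$ and the measure of the annulus $\{1/2<|h|\le M_p\}$ by $E$ requires controlling the negative part of the integrand, which is the very statement left unproved.

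For contrast, the paper resolves this difficulty by structural rather than Taylor-expansion means. It factors $f=UI$ into outer and inner parts and uses $\|f-1\|_p^p\le\|U-1\|_p^p+\|I-1\|_p^p$. For the inner factor, positivity ($1-\Real I\ge 0$) together with $|I|=1$ a.e.\ and $1-I(0)\lesssim_p\varepsilon$ gives $\|I-1\|_p^p\lesssim_p\varepsilon^{p/2}$ by elementary $L^1$--$L^2$ estimates. For the outer factor, the zero-free property makes $U^{p/2}$ a genuine analytic function, and $L^2$ \emph{orthogonality} yields $\|U^{p/2}-1\|_2\le 2\varepsilon^{1/2}$ --- this is where the square-root gain actually comes from --- after which weak-type bounds for $\log|U|$ and for its conjugate $\arg U$ control the measure of the sets where $U$ is far from $1$, allowing the passage from $U^{p/2}-1$ back to $U-1$. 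These tools (factorization, subharmonicity/Jensen, conjugate-function estimates) are exactly what your counterexample shows cannot be replaced by the two mean-zero identities. As written, your proposal proves only the easy regime $E\ge 1$; the core of the lemma remains open.
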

\begin{proof}
	In this proof, we use repeatedly the elementary inequality $ |a+b|^p\le |a|^p+|b|^p$, which is our replacement for the triangle inequality. We see in particular, by this inequality and the presence of the term $\| f\|_p^p - |f(0)|^p$ inside the brackets on the right-hand side, that \eqref{basicp} is trivial if, say, $\| f \|_p^p\ge (3/2) |f(0)|^2$. We may therefore disregard this case and assume that $f$ satisfies $f(0)=1$ and $\|f\|^p_p=1+\varepsilon $ with $\varepsilon <1/2.$ Our aim is to show that, under this assumption, 
	\begin{equation}\label{keyprove} 
		\| f-1\|_p^p \le C_p \varepsilon^{p/2}. 
	\end{equation}
	
	We begin by writing $f=UI$, where $U$ is an outer function and $I$ is an inner function, such that $U(0)>0$. By subharmonicity of $|U|^p$, we have $1\leq |U(0)|\le (1+\varepsilon)^{1/p}\le 1+c_p\varepsilon.$ This means that $I(0)\ge (1+c_p\varepsilon)^{-1}\ge 1-c_p\varepsilon$. We write $f-1=(U-1)I+I-1$ and obtain consequently that 
	\begin{equation}\label{summands} 
		\| f-1\|_p^p\le \| U-1\|_p^p+\| I-1\|_p^p. 
	\end{equation}
	In order to prove \eqref{keyprove}, it is therefore enough to show that each of the two summands on the right-hand side of \eqref{summands} is bounded by a constant times $\varepsilon^{p/2}$.
	
	We begin with the second summand on the right-hand side of \eqref{summands} for which we claim that 
	\begin{equation}\label{Iint} 
		\| I-1\|_p^p\le C_p' \varepsilon^{p/2} 
	\end{equation}
	holds for some constant $C_p'$. We write $I=u+iv$, where $u$ and $v$ are respectively the real and imaginary part of $I$. Since $1-u\ge 0$, we see that 
	\begin{equation}\label{eq:1} 
		\| 1-u \|_1=\int_{\T} (1-u(z) )dm(z) = 1-I(0) \le c_p \varepsilon. 
	\end{equation}
	Using H\"{o}lder's inequality, we therefore find that 
	\begin{equation}\label{eq:2} 
		\| 1-u \|_p^p \le c_p^P \varepsilon^p. 
	\end{equation}
	In view of \eqref{eq:1} and using that $|I|=1$ and $(1-u^2)\leq 2(1-u)$, we also get that
	\[ \|v\|_p^p \le \| v\|_2^{p}=\| 1-u^2\|_1^{p/2} \le \left(2 \| 1-u\|_1\right)^{p/2} \le (2c_p)^{p/2} \varepsilon^{p/2}. \]
	Combining this inequality with \eqref{eq:2}, we get the desired bound \eqref{Iint}.
	
	We turn next to the first summand on the right-hand side of \eqref{summands} and the claim that 
	\begin{equation}\label{Uint} 
		\|U-1\|_p^p\le C_p'' \varepsilon^{p/2} 
	\end{equation}
	holds for some constant $C_p''$. By orthogonality, we find that
	\[ \|U^{p/2}-U(0)^{p/2}\|_2^2\le \varepsilon\]
	and hence 
	\begin{equation}\label{basic} 
		\| U^{p/2}-1 \|_2 \le \|U^{p/2}-U(0)^{p/2}\|_2+ (U(0)^{p/2}-1)^{1/2} \le 2 \varepsilon^{1/2}. 
	\end{equation}
	Since $|U^{p/2}-1|\ge ||U|^{p/2}-1|\ge (p/2)\log_+ |U|$ and $U(0)\geq 1$, this implies that 
	\begin{equation}\label{second} 
		\|\log |U|\|_1=2 \| \log_+|U|\|_1 -\log |U(0)| \le 8p^{-1} \varepsilon^{1/2}. 
	\end{equation}
	It follows that
	\[ m\left(\left\{z:\ |\log|U(z)||\ge \lambda\right\}\right)\le 8(p \lambda )^{-1} \varepsilon^{1/2} \quad \text{and} \quad m\left(\left\{z:\ |\arg U(z)|\ge \lambda \right\}\right)\le C\lambda^{-1} \varepsilon^{1/2}, \]
	where the latter inequality is the classical weak-type $L^1$ estimate for the conjugation operator. We now split $\T$ into three sets 
	\begin{align*}
		E_1&:=\left\{z\,: \, |U(z)|>3/2\} \cup \{ z:\ |U(z)|<1/2 \right\}, \\
		E_2 &:= \left\{z\,: \, 1/2\le |U(z)|\le 3/2, \, |\arg U(z)|\ge \pi/4 \right\}, \\
		E_3 &:= \T\setminus(E_1\cup E_2). 
	\end{align*}
	It is immediate from \eqref{basic} that
	\[ \| \chi_{E_1} (U-1)\|^p_p \lesssim \varepsilon. \]
	Since $m(E_2)\le C \varepsilon^{1/2}$, we have trivially that
	\[ \| \chi_{E_2} (U-1)\|^p_p \le C (5/2)^p \varepsilon^{1/2}. \]
	Finally, on $E_3$, we have that $|U^{p/2}-1|\simeq |U-1|$, and so it follows from \eqref{basic} and H\"older's inequality that
	\[ \| \chi_{E_3} (U-1)\|^p_p \lesssim \varepsilon^{p/2}. \]
	Now the desired inequality \eqref{Uint} follows by combining the latter three estimates. 
\end{proof}

One may notice that that in the last step of the proof above we could have used \eqref{second} and the fact that the conjugation operator is bounded from $L^1$ to $L^p$. It seems that the exponent $p/2$ is the best we can get. It is also curious to note that with $p=2/k$ and $k\geq 2$ an integer, one could avoid the use of the weak-type estimate for $\arg U$ and get a very slick argument by simply observing that if $g=U^{p/2}$ and $\omega_1,\ldots ,\omega_k$ are the $k$th roots of unity, then by H\"older's inequality,
\[ \|U-1\|_p\leq \prod_{j=1}^k\|g-\omega_j\|_2, \]
and on the right hand side one $L^2$-norm is estimated by $\varepsilon^{1/2}$ and the others by a constant since we are assuming $\varepsilon \leq 1/2.$ Again one could raise the question if one can interpolate to get all exponents. 
\begin{lemma}\label{pr:1} 
	Suppose that $0<p<1$. If $g$ is a polynomial on $\T^\infty$, then
	\[ \| A_{m+k}g-A_mg\|_p^p\, \le \, C_p \left(\| A_{m+k}g\|_p^p- \| A_{m}g\|_p^p \, + \, \| A_{m}g\|_p^{p -p^2/2}\left( \| A_{m+k}g\|_p^p- \| A_{m}g\|_p^p\right)^{p/2}\right) \]
	holds for arbitrary positive integers $m$ and $k$, where $C_p$ is as in Lemma~\ref{basicp}. 
\end{lemma}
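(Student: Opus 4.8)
The plan is to deduce the $(m+k)$-variable inequality from the one-variable estimate of Lemma~\ref{le:4} by introducing a diagonal dummy variable that collapses the $k$ extra directions onto a single circle, and then integrating the resulting pointwise inequality.

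Write $G:=A_{m+k}g$, an analytic polynomial on $\T^{m+k}$, and split the variables as $(z,w)\in\T^m\times\T^k$. For a fresh variable $u\in\T$ I would, for each fixed $(z,w)$, consider the one-variable analytic polynomial
\[
f_{z,w}(u):=G(z,uw_1,\dots,uw_k).
\]
Expanding $G(z,w)=\sum_{\alpha\in\N^k}c_\alpha(z)w^\alpha$ in the last $k$ variables gives $f_{z,w}(u)=\sum_\alpha c_\alpha(z)u^{|\alpha|}w^\alpha$, so that $f_{z,w}(0)=c_0(z)=A_mg(z)$, independently of $w$. This is the key point: the total-degree grading in the last $k$ variables is recorded by the single variable $u$, and degree zero means that all $k$ exponents vanish, so subtracting $f_{z,w}(0)$ is precisely the operation of passing from $A_{m+k}g$ to $A_mg$. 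Applying Lemma~\ref{le:4} to $f_{z,w}$ then yields, pointwise in $(z,w)$, inequality \eqref{basicp} with $f(0)$ replaced by $A_mg(z)$ and $\|f\|_p^p$ replaced by $\|f_{z,w}\|_{L^p(\mathrm{d}u)}^p$.

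Next I would integrate this pointwise inequality over $(z,w)\in\T^m\times\T^k$. Since for each fixed $u\in\T$ the map $w\mapsto uw$ preserves $\mu_k$, the substitution $w\mapsto uw$ combined with $\mu(\T)=1$ shows (via Fubini) that the left-hand side integrates to $\|A_{m+k}g-A_mg\|_p^p$, while the difference $\|f_{z,w}\|_{L^p(\mathrm{d}u)}^p-|f_{z,w}(0)|^p$ integrates to $\|A_{m+k}g\|_p^p-\|A_mg\|_p^p$; in particular $\int_\T|f_{z,w}(u)|^p\,\mathrm{d}u$ integrates in $w$ to $\Theta(z):=\int_{\T^k}|G(z,w)|^p\,\mathrm{d}\mu_k(w)$.

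The one step requiring genuine care is the cross term
\[
|A_mg(z)|^{p-p^2/2}\bigl(\|f_{z,w}\|_{L^p(\mathrm{d}u)}^p-|A_mg(z)|^p\bigr)^{p/2}.
\]
Writing $a(z):=|A_mg(z)|^p$ and $\Psi(z,w):=\|f_{z,w}\|_{L^p(\mathrm{d}u)}^p$, the iterated one-dimensional estimate $|f_{z,w}(0)|^p\le\|f_{z,w}\|_{L^p(\mathrm{d}u)}^p$ guarantees $\Psi-a\ge0$, so I may integrate first in $w$, using the concavity of $t\mapsto t^{p/2}$ and Jensen's inequality, to bound $\int_{\T^k}(\Psi-a)^{p/2}\,\mathrm{d}\mu_k$ by $(\Theta(z)-a(z))^{p/2}$. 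Integrating the remaining quantity $a(z)^{1-p/2}(\Theta(z)-a(z))^{p/2}$ in $z$ by Hölder's inequality with the conjugate exponents $(1-p/2)^{-1}$ and $2/p$ then produces exactly $\|A_mg\|_p^{p-p^2/2}(\|A_{m+k}g\|_p^p-\|A_mg\|_p^p)^{p/2}$. Because both Jensen and Hölder are applied with constant one, the constant $C_p$ is inherited unchanged from Lemma~\ref{le:4}, which gives the assertion. The main obstacle is really the initial reduction: once the dummy-variable substitution is in place, the remainder is just Fubini, rotation invariance, and the two convexity inequalities described above.
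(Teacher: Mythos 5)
Your proposal is correct and follows essentially the same route as the paper: the paper also applies Lemma~\ref{le:4} to the one-variable slice $f(z):=A_{m+k}g(w,zw')$, whose value at $z=0$ is $A_mg(w)$, and then integrates over $\T^{m+k}$, handling the cross term with H\"older's inequality. Your extra Jensen step in the $w$-variable is a harmless elaboration of that single H\"older application, since $|A_mg(z)|^p$ is constant in $w$.
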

\begin{proof}
	We set $h:=A_{m+k}g$ and view $h$ as a function on $\T^{m}\times \T^k$ so that $A_m g (w,w')=h(w,0)$. Now fix arbitrary points $w$ in $\T^m$ and $w'$ in $\T^k.$ We apply the preceding lemma to the function
	\[ f(z):=h(w,zw'), \]
	which is an analytic function on $\D$. This yields 
	\begin{align*}
		\int_{\T} |h(w,zw')-h(w,0)|^p \,d\mu(z) \le C_p \Bigg(&\int_{\T}|h(w,zw')|^p\,d\mu(z) - |h(w,0)|^p \\
		& + |h(w,0)|^{p-p^2/2}\left(\int_{\T}|h(w,z w')|^p\,d\mu(z) \, -\, |h(w,0)|^p\right)^{p/2}\Bigg). 
	\end{align*}
	The claim follows by integrating both sides with respect to $(w,w')$ over $\T^{m+k}$ and applying H\"older's inequality to the last term on the right-hand side. 
\end{proof}
\begin{proof}
	[Proof of Theorem \ref{th:Abschnitte} for $0<p<1$] If $f$ is in $H^p(\D^\infty)$, then clearly \eqref{eq:mart} holds. To prove the reverse implication, we start from a formal Taylor series $f$ for which \eqref{eq:mart} holds. Then by assumption $A_m f$ is in $H^p(\D^\infty)$, and we have that $A_m(A_{m'}f)=A_mf$ whenever $m'\geq m\geq 1$. Therefore the quasi-norms $\|A_m f\|_{H^p(\D^\infty)}$ constitute an increasing sequence, and hence \eqref{eq:mart} implies that
	\[ \lim_{m\to\infty}\sup_{k\geq 1}\left(\|A_{m+k}f\|_{H^p(\D^\infty)}-\|A_{m}f\|_{H^p(\D^\infty)}\right)=0. \]
	By Lemma~\ref{pr:1}, we therefore find that $(A_mf)_{m\geq 1}$ is a Cauchy sequence in $H^p(\D^\infty)$, whence $f=\lim_{m\to\infty}A_mf$ in $H^p(\D^\infty)$ since an element in $H^p(\D^\infty)$ is uniquely determined by the sequence $A_m f$. 
\end{proof}

\subsection{Definition of $\mathcal{H}^p$} 
The systematic study of the Hilbert space $\Ht$ began with the paper \cite{HLS97} which defined $\Ht$ to be the collection of Dirichlet series
\[ f(s)=\sum_{n=1}^\infty a_n n^{-s}, \]
subject to the condition $\| f\|_\Ht:= \left(\sum_{n=1}^\infty|a_n|^2\right)^{1/2}<\infty$. The space $\Ht$ consists of functions analytic in the half-plane $\mathbb{C}_{1/2}:=\{s=\sigma+it: \ \sigma>1/2\}$,
since the Cauchy--Schwarz inequality shows that the above Dirichlet series converges absolutely for those values of $s$. Bayart \cite{Bayart02} extended the definition to every $p>0$ by defining $\Hp$ as the closure of all Dirichlet polynomials $f(s):=\sum_{n=1}^{N}a_n n^{-s}$ under the norm (or quasi-norm when $0<p<1$) 
\begin{equation}\label{eq:hpdef} 
	\|f\|_{\Hp}:= \left(\lim_{T\to\infty}\frac{1}{2T}\int_{-T}^T|f(it)|^p \,dt\right)^{1/p}. 
\end{equation}
Computing the limit when $p=2$, we see that \eqref{eq:hpdef} gives back the original definition of $\Ht$. However, at first sight it is not clear that the above definition of $\Hp$ is the right one or that it even yields spaces of convergent Dirichlet series in any right half-plane.

The clarification of these matters is provided by the Bohr lift \eqref{eq:bohr}. By Birkhoff's ergodic theorem (or by an elementary argument found in \cite[Sec.~3]{SS09}), we obtain the identity 
\begin{equation}\label{eq:norm} 
	\|f\|_{\Hp}=\|\mathcal{B} f\|_{H^p(\D^\infty)}:=\left(\int_{\T^\infty}|\mathcal{B}f(z)|^p\,d \mu_\infty(z)\right)^{1/p}. 
\end{equation}
Since the Hardy spaces on the infinite dimensional torus $H^p(\D^\infty)$ may be defined as the closure of analytic polynomials in the $L^p$-norm on $\T^\infty$, it follows that the Bohr correspondence gives an isomorphism between the spaces $H^p(\D^\infty)$ and $\Hp$. This linear isomorphism is both isometric and multiplicative, and this results in a fruitful interplay: Many questions in the theory of the spaces $\Hp$ can be better treated by considering the isomorphic space $H^p(\D^\infty)$, and vice versa. An important example is the Cole-Gamelin estimate \eqref{eq:CGestimate} which immediately implies that for every $p>0$ the space $\Hp$ consists of analytic functions in the half-plane $\mathbb{C}_{1/2}$. In fact, we infer from \eqref{eq:CGestimate} that
\[ |f(\sigma+it)|^p\le \zeta(2\sigma) \| f\|_{\Hp}^p \]
holds whenever $\sigma>1/2$, where $\zeta(s)$ is the Riemann zeta function. Moreover, since the coefficients of a convergent Dirichlet series are unique, functions in $\Hp$ are completely determined by their restrictions to the half-plane $\mathbb{C}_{1/2}$. This means in particular that $\Hp$ can be thought of as a space of analytic functions in this half-plane.

To complete the picture, we mention that $\Hi$ is defined as the space of Dirichlet series $f(s)=\sum_{n=1}^\infty a_n n^{-s}$ that represent bounded analytic functions in the half-plane $\sigma>0$. We endow $\Hi$ with the norm
\[ \| f\|_{\Hi}:=\sup_{\sigma>0} |f(s)|, \quad s=\sigma+it, \]
and then the Bohr lift allows us to associate $\Hi$ with $H^\infty(\D^\infty)$. We refer to \cite{QQ13} for this fact and further details about the interesting and rich function theory of $\Hi$.


\subsection{Summary of known results} The function theory of the two distinguished spaces $\Ht$ and $\Hi$ is by now quite well developed; we refer again to \cite{QQ13, SS17} for details. The results for the range $1\le p < \infty$, $p\neq 2$, are less complete. In this section, we mention briefly some key results that extend to the whole range $0<p<\infty$, as well as some familiar difficulties that arise in our attempts to make such extensions.

We begin with the theorem on multipliers that was first established in \cite{HLS97} for $p=2$ and extended to the range $1\le p < \infty$ in \cite{Bayart02}. We recall that a multiplier $m$ for $\Hp$ is a function such that the operator $f\mapsto mf$ is bounded on $\Hp$, and the multiplier norm is the norm of this operator. The theorem on multipliers asserts that the space of multipliers for $\Hp$ is equal to $\Hi$, and this remains true for $0<p<1$, by exactly the same proof as in \cite{Bayart02}. Another result that carries over without any change, is the Littlewood--Paley formula of \cite[Sec.~5]{BQS}. The latter result was already used in \cite{BPS16}.

For some results, only a partial extension from the case $p=2$ is known to hold. A well known example is whether the $L^p$ integral of a Dirichlet polynomial $f(s)=\sum_{n=1}^N a_n n^{-s}$ over any segment of fixed length on the vertical line $\mre s=1/2$ is bounded by a universal constant times $\| f \|_{\Hp}^{p}$. This is known to hold for $p=2$ and thus trivially for $p=2k$ for $k$ a positive integer. As shown in \cite{OS12}, this embedding holds if and only if the following is true: The boundedly supported Carleson measures for $\Hp$ satisfy the classical Carleson condition in $\C_{1/2}$.

There is an interesting counterpart for $p<2$ to the trivial embedding for $p=2k$ and $k$ a positive integer $>1$. This is the following statement about interpolating sequences. If $S=(s_j)$ is a bounded interpolating sequence in $\C_{1/2}$, then we can solve the interpolation problem $f(s_j)=a_j$ in $\Hp$ when
\[ \sum_j |a_j|^p(2\sigma_j-1)<\infty \]
and $p=2/k$ for $k$ a positive integer. Indeed, choose any $k$th root $a_j^{1/k}$ and solve $g(s_j)=a_j^{1/k}$ in $\Ht$. Then $f=g^k$ solves our problem in $\Hp$. We do not know if this result extends to any $p$ which is not of the form $p=2/k$. Comparing the two trivial cases, we observe that there is an interesting ``symmetry'' between the embedding problem for $\Hp$ and the interpolation problem for $\mathcal{H}^{4/p}$. A similar phenomenon was observed in \cite{BOSZ} and will also be explored in the next section. 

\section{Linear functionals associated with the Riemann zeta function} \label{sec:HL} 
It was asked in \cite[Sec.~5]{Hilbert16} whether the primitive of the half-shift of the Riemann zeta function
\[\varphi(s): = 1 + \sum_{n=2}^\infty \frac{1}{\sqrt{n}\log{n}} n^{-s}\]
defines a bounded linear functional on $\mathcal{H}^1$, or equivalently: Is there a constant $C$ such that 
\begin{equation}\label{eq:hilbert} 
	\left|a_1 + \sum_{n=2}^N \frac{a_n}{\sqrt{n}\log{n}}\right| \leq C \|f\|_{\mathcal{H}^p} 
\end{equation}
for every Dirichlet polynomial $f(s) = \sum_{n=1}^N a_n n^{-s}$ when $p=1$? Clearly, \eqref{eq:hilbert} is satisfied if $p=2$, and it was shown in \cite{BB16} that \eqref{eq:hilbert} holds whenever $p>1$. It was also demonstrated in \cite{BB16} that $\varphi$ is in $\mathcal{H}^p$ if and only if $p<4$. 

We are still not able to answer the original question from \cite{Hilbert16}, but we will prove some complementary results that shed more light on this and related questions about duality. For $\beta>0$, consider the following fractional primitives of the half-shift of the Riemann zeta function:
\begin{equation}\label{eq:betafunc}\varphi_\beta(s) := 
1 + \sum_{n=2}^\infty \frac{1}{\sqrt{n}(\log{n})^\beta}\,n^{-s}. \end{equation}
We are interested in the following questions. 
\begin{enumerate}
	\item[(a)] For which $\beta>0$ is $\varphi_\beta$ in $\mathcal{H}^p$, when $2 \leq p < \infty$? 
	\item[(b)] For which $\beta>0$ is $\varphi_\beta$ in $(\mathcal{H}^p)^\ast$, when $0<p\leq 2$? 
\end{enumerate}
Before proceeding, let us clarify question (b). The linear functional generated by $\varphi_\beta$ can be expressed as
\[\langle f, \varphi_\beta \rangle_{\mathcal{H}^2} := a_1 + \sum_{n=2}^\infty \frac{a_n}{\sqrt{n}(\log{n})^\beta},\]
when $f(s)=\sum_{n=1}^\infty a_n n^{-s}$. We say that the linear functional generated by $\varphi_\beta$ acts boundedly on $\mathcal{H}^p$, or equivalently that $\varphi_\beta$ is in $(\mathcal{H}^p)^\ast$, if there is a constant $C>0$ such that
\[|\langle f, \varphi_\beta \rangle_{\mathcal{H}^2}| \leq C \|f\|_{\mathcal{H}^p}\]
for every Dirichlet polynomial $f(s) = \sum_{n=1}^N a_n n^{-s}$. Our result is: 
\begin{theorem}\label{thm:duality} 
	Suppose that $\beta>0$.
	\begin{enumerate}
		\item[(a)] Let $2\leq p < \infty$. Then $\varphi_\beta$ is in $\mathcal{H}^p$ if and only if $\beta>p/4$. 
		\item[(b)] Let $0<p\leq 2$. If $\beta>1/p$ then $\varphi_\beta$ is in $(\mathcal{H}^p)^\ast$ and if $\beta<1/p$ then $\varphi_\beta$ is not in $(\mathcal{H}^p)^\ast$. 
	\end{enumerate}
\end{theorem}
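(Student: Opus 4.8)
The common engine will be the integral representation obtained from $\frac{1}{(\log n)^\beta}=\frac{1}{\Gamma(\beta)}\int_0^\infty t^{\beta-1}n^{-t}\,dt$, namely
\[\varphi_\beta(s)=1+\frac{1}{\Gamma(\beta)}\int_0^\infty t^{\beta-1}\bigl(\zeta(s+\tfrac12+t)-1\bigr)\,dt,\]
which exhibits $\varphi_\beta$ as a superposition of the genuinely multiplicative functions $\zeta(\,\cdot\,+\tfrac12+t)$, whose Bohr lift is $\prod_j(1-p_j^{-1/2-t}z_j)^{-1}$. Since the $\Hp$ norm of a multiplicative function factors over the primes, I would first record
\[\|\zeta(\,\cdot\,+\tfrac12+t)\|_{\Hp}^p=\prod_j\int_\T\frac{d\mu(z)}{|1-p_j^{-1/2-t}z|^p},\]
and, using $\int_\T|1-rz|^{-p}\,d\mu=1+\tfrac{p^2}{4}r^2+O(r^4)$ together with $\sum_p p^{-1-2t}\simeq\log\tfrac1t$, deduce the key asymptotic $\|\zeta(\,\cdot\,+\tfrac12+t)\|_{\Hp}\simeq t^{-p/4}$ as $t\to0^+$ (with exponential decay as $t\to\infty$); the case $p=2$ is the sanity check $\zeta(1+2t)^{1/2}\simeq t^{-1/2}$.

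For the two sufficiency halves I would argue as follows. In (a), since $p\ge1$, Minkowski's integral inequality gives $\|\varphi_\beta-1\|_{\Hp}\le\frac{1}{\Gamma(\beta)}\int_0^\infty t^{\beta-1}\|\zeta(\,\cdot\,+\tfrac12+t)-1\|_{\Hp}\,dt$, and the asymptotic above makes the integral converge near $t=0$ precisely when $\beta>p/4$. In (b) I would use the pairing identity $\langle f,\zeta(\,\cdot\,+\tfrac12+t)\rangle_{\Ht}=f(\tfrac12+t)$, which turns the representation into $\langle f,\varphi_\beta\rangle=a_1+\frac{1}{\Gamma(\beta)}\int_0^\infty t^{\beta-1}\bigl(f(\tfrac12+t)-a_1\bigr)\,dt$; feeding in the Cole--Gamelin estimate $|f(\tfrac12+t)|\le\zeta(1+2t)^{1/p}\|f\|_{\Hp}\simeq t^{-1/p}\|f\|_{\Hp}$ shows the functional is bounded as soon as $\beta>1/p$. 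Both arguments are valid for every admissible $p$.

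The necessity in (b) I would obtain from explicit extremal test functions. Taking $F_t:=\zeta(\,\cdot\,+\tfrac12+t)^{2/p}\in\Hp$, one has $\|F_t\|_{\Hp}^p=\|\zeta(\,\cdot\,+\tfrac12+t)\|_{\Ht}^2=\zeta(1+2t)\simeq t^{-1}$, while the pairing computation above yields $\langle F_t,\varphi_\beta\rangle\simeq\int_0^\infty u^{\beta-1}(t+u)^{-2/p}\,du\simeq t^{\beta-2/p}$ (the integral converging because $\beta<1/p<2/p$). Hence $|\langle F_t,\varphi_\beta\rangle|/\|F_t\|_{\Hp}\simeq t^{\beta-1/p}\to\infty$ as $t\to0^+$ when $\beta<1/p$, so $\varphi_\beta\notin(\Hp)^\ast$.

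The remaining and genuinely hardest point is the necessity in (a): that $\beta\le p/4$ forces $\varphi_\beta\notin\Hp$, including the sharp critical line $\beta=p/4$. For even integers $p=2k$ this is elementary, since $\|\varphi_\beta\|_{\Hp}^{2k}=\sum_m\bigl|\sum_{n_1\cdots n_k=m}b_{n_1}\cdots b_{n_k}\bigr|^2$ with $b_n=n^{-1/2}(\log n)^{-\beta}$, and one checks this diverges for $\beta\le k/2$. For general real $p$, however, neither monotonicity nor interpolation between the even-integer endpoints pins the threshold (the admissible even integers leave gaps just below the critical line), and the naive lower bound coming from $L^p$--$L^{p'}$ duality only reaches the strictly smaller exponent $1-1/p$. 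The sharp lower bound must instead exploit the full multiplicative/tensor structure: I would pair $\varphi_\beta$ against multiplicative competitors $\prod_j g_j(z_j)$ and reduce, via the representation, to a one-variable extremal problem whose sharp solution (a Hardy--Littlewood type inequality on $\T$) is then reassembled over the primes and integrated in $t$. It is exactly this step that produces the exponent $p/4$ rather than $1-1/p$ and encodes the contractive symmetry $\Hp\leftrightarrow\mathcal{H}^{4/p}$; establishing the correct one-variable inequality and controlling the resulting product is where I expect the main difficulty to lie.
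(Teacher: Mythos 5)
Your integral representation $\varphi_\beta(s)=1+\frac{1}{\Gamma(\beta)}\int_0^\infty t^{\beta-1}\bigl(\zeta(s+\tfrac12+t)-1\bigr)\,dt$, together with the Euler-product computation $\|\zeta(\cdot+\tfrac12+t)\|_{\Hp}\simeq t^{-p/4}$, gives correct and genuinely different proofs of three of the four assertions: sufficiency in (a) via Minkowski's integral inequality (legitimate since $p\geq 2\geq 1$), sufficiency in (b) via the Cole--Gamelin bound \eqref{eq:CGestimate} applied to $f(\tfrac12+t)$, and necessity in (b) via the test functions $F_t=\zeta(\cdot+\tfrac12+t)^{2/p}$. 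The paper instead proves both sufficiency statements from the Hardy--Littlewood inequalities of Lemma~\ref{lem:multmixedineqs} (\eqref{eq:multmix1}, respectively \eqref{eq:multmix2} plus Cauchy--Schwarz) together with the average order \eqref{eq:avgorder} and Abel summation, and proves necessity in (b) with the truncated Euler products $\prod_{p_j\leq N}(1-p_j^{-1/2-s})^{-2/p}$; your shift parameter $t$ plays exactly the role of the paper's $1/\log N$, so that part is morally the same. Two technical debts in your version are worth noting but are routine: your $F_t$ are not Dirichlet polynomials, so the pairing must be justified by approximation (the coefficients are nonnegative, so Theorem~\ref{th:Abschnitte} plus dilations and monotone convergence suffice), and the decay of the integrands as $t\to\infty$ needs a sentence.

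The genuine gap is exactly where you place it: necessity in (a) for general real $p\geq 2$, including the critical line $\beta=p/4$. Your sketch (``pair against multiplicative competitors and solve a one-variable extremal problem'') is not carried out, and observe that the integral representation cannot help here, since a superposition yields upper bounds for quasi-norms, never lower bounds; a coefficient-based lower bound is indispensable. The paper closes this with the inequality \eqref{eq:multmix2} --- which is precisely the ``Hardy--Littlewood inequality reassembled over the primes'' you hope to construct, and which is available as a black box from \cite{BBSSZ} --- combined with two devices missing from your sketch. First, one writes $\|\varphi_\beta\|_{\Hp}^p=\|\varphi_\beta^k\|_{\mathcal{H}^q}^q$ with $k=\lfloor p\rfloor$ and $q=p/k\in[1,2]$, so that \eqref{eq:multmix2} applies to $\varphi_\beta^k$, whose $n$th coefficient is bounded below by $d_k(n)\,n^{-1/2}(\log n)^{-k\beta}$. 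Second, one restricts the resulting sum to squarefree $n$, where $\Phi_\alpha(n)=d_\alpha(n)=\alpha^{\Omega(n)}$, so that $d_k(n)^2/d_{2k/p}(n)=d_{pk/2}(n)$ exactly; this yields $\|\varphi_\beta\|_{\Hp}^p\gtrsim\bigl(\sum_{n\geq 2}|\mu(n)|\,d_{p\lfloor p\rfloor/2}(n)\,n^{-1}(\log n)^{-2\lfloor p\rfloor\beta}\bigr)^{q/2}$, which diverges by Abel summation precisely when $2\lfloor p\rfloor\beta\leq p\lfloor p\rfloor/2$, i.e.\ when $\beta\leq p/4$. Your even-integer computation is the special case $q=2$ of this argument; the passage to the power $\varphi_\beta^k$ and the squarefree restriction are what make the same computation work for all real $p\geq 2$, with no new one-variable extremal problem to solve.
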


It is well-known that the dual space $(\mathcal{H}^p)^\ast$ for $1<p<\infty$ is not equal to $\mathcal{H}^q$ with $p^{-1}+q^{-1}=1$ (see \cite[Sec.~3]{SS09}). Theorem~\ref{thm:duality} provides additional examples illustrating this fact. One may observe in Theorem \ref{thm:duality} the first appearance in the present paper of the contractive symmetry between (the dual of)  $\Hp$ and $\mathcal{H}^{4/p}$. Namely,  the result (loosely speaking) tells us that if $\varphi_\beta$ in $\mathcal{H}^{4/p}$ then $\varphi_\beta$ is in $(\mathcal{H}^{p})^\ast$.

The proof relies on Hardy--Littlewood inequalities for Dirichlet series. The general divisor function $d_\alpha(n)$ for $\alpha\geq1$ is defined by the rule 
\begin{equation}\label{eq:dadef} 
	\zeta^\alpha(s) = \sum_{n=1}^\infty d_\alpha(n) n^{-s}, \quad \sigma>1.
\end{equation}
By the Euler product of $\zeta(s)$ and the binomial series, we note that $d_\alpha(n)$ is a multiplicative function whose value at the prime powers is
\[d_\alpha(p_j^k) = \binom{k+\alpha-1}{k}.\]
If $k$ is an integer, then $d_k(n)$ denotes the number of ways we may write $n$ as a product of $k$ positive integers.  In this case, we have (see \cite[Lem.~3]{BBSSZ}) 
\begin{align*}
	\left(\sum_{n=1}^\infty \frac{|a_n|^2}{d_k(n)}\right)^\frac{1}{2} &\leq \|f\|_{2/k}, \\
	\|f\|_{2k} &\leq \left(\sum_{n=1}^\infty |a_n|^2 d_k(n)\right)^\frac{1}{2}.
\end{align*}
if $f(s) = \sum_{n=1}^\infty a_n n^{-s}$. Moreover, it is conjectured (see \cite{BOSZ}) that these inequalities in fact hold for all real numbers $k\geq1$. As a replacement, we will use the following weaker result from \cite{BBSSZ}, which is obtained by a kind of completely multiplicative interpolation between the integers $k\geq1$ in the inequalities above.

\begin{lemma} \label{lem:multmixedineqs}
	For $\alpha\geq1$, let
	\begin{equation}\label{eq:multmixweight} 
		\Phi_\alpha(n) := d_{\lfloor \alpha \rfloor}(n)\, \left(\frac{\alpha}{\lfloor \alpha \rfloor}\right)^{\Omega(n)}.
	\end{equation}
	If $f(s) = \sum_{n=1}^\infty a_n n^{-s}$, then 
	\begin{align}
		\left(\sum_{n=1}^\infty \frac{|a_n|^2}{\Phi_{2/p}(n)}\right)^\frac{1}{2} &\leq \|f\|_p, & p\leq 2, \label{eq:multmix2} \\
		\|f\|_p &\leq \left(\sum_{n=1}^\infty |a_n|^2 \Phi_{p/2}(n)\right)^\frac{1}{2}, & p\geq2. \label{eq:multmix1} 
	\end{align}
\end{lemma}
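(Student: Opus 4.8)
The plan is to derive both inequalities from the integer-exponent anchors already recorded above (the cases $p=2k$ and $p=2/k$ with $k\in\N$) by inserting a single completely multiplicative operator whose only job is to shift the exponent; this is precisely the ``completely multiplicative interpolation'' alluded to. For $r>0$ I would set
\[ T_r f(s):=\sum_{n=1}^\infty a_n r^{\Omega(n)}\, n^{-s},\qquad f(s)=\sum_{n=1}^\infty a_n n^{-s}. \]
Since $\Omega$ is completely additive, $r^{\Omega(n)}$ is completely multiplicative, and under the Bohr lift $T_r$ is exactly the coordinatewise dilation $\mathcal{B}(T_r f)(z)=\mathcal{B}f(rz)$. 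The one external tool is the sharp hypercontractive bound for these dilations: for $0<p\le q<\infty$,
\[ \|T_r f\|_q\le \|f\|_p\qquad\text{whenever } 0<r\le\sqrt{p/q}. \]
This is Weissler's theorem on $\D$ tensorized to $\T^\infty$, which I take as known and valid in the full range $0<p\le q$ (including $p<1$).

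For the upper inequality \eqref{eq:multmix1} I fix $p\ge2$, put $\alpha=p/2$ and $k=\lfloor\alpha\rfloor$, and choose $c:=\sqrt{\alpha/k}\ge1$. Applying the integer anchor $\|g\|_{2k}\le(\sum|b_n|^2 d_k(n))^{1/2}$ to $g=T_c f$ gives
\[ \|T_c f\|_{2k}^2\le \sum_n |a_n|^2 c^{2\Omega(n)} d_k(n)=\sum_n |a_n|^2\Big(\tfrac{\alpha}{k}\Big)^{\Omega(n)} d_k(n)=\sum_n |a_n|^2\,\Phi_{p/2}(n), \]
the last equality being the definition \eqref{eq:multmixweight} of $\Phi_{p/2}$. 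To recover $f$ I write $f=T_{1/c}(T_c f)$ and apply hypercontractivity with source exponent $2k$, target exponent $2\alpha=p$, and radius $r=1/c=\sqrt{k/\alpha}=\sqrt{2k/p}$, which meets the boundary condition $r=\sqrt{(2k)/p}$ exactly; this yields $\|f\|_p\le\|T_c f\|_{2k}$, and combining the two displays proves \eqref{eq:multmix1}.

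The lower inequality \eqref{eq:multmix2} is the dual manoeuvre. I fix $p\le2$, put $\alpha=2/p$, $k=\lfloor\alpha\rfloor$, and choose $c':=\sqrt{k/\alpha}\le1$. Hypercontractivity with source exponent $p=2/\alpha$, target $2/k$, at the boundary radius $r=c'=\sqrt{(2/\alpha)/(2/k)}$, gives $\|T_{c'}f\|_{2/k}\le\|f\|_p$. Feeding $T_{c'}f$ into the integer anchor $(\sum|b_n|^2/d_k(n))^{1/2}\le\|g\|_{2/k}$ and using the identity $(c')^{2\Omega(n)}/d_k(n)=(k/\alpha)^{\Omega(n)}/d_k(n)=1/\Phi_{2/p}(n)$ then yields \eqref{eq:multmix2}.

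All steps are carried out for Dirichlet polynomials; the general case follows from the density built into the definition of $\mathcal{H}^p$, and the inequalities are vacuous when the right-hand sums diverge. The genuinely substantive input, and hence the hard part of making the argument self-contained, is the sharp hypercontractive estimate on $\T^\infty$ at the \emph{critical} radius $r=\sqrt{p/q}$: one must confirm that the one-dimensional Weissler bound holds with the sharp constant for all $0<p\le q$ (the lower inequality forces $p<1$ when $k\ge2$) and that it tensorizes to infinitely many variables. Everything else is the bookkeeping identity $c^{2\Omega(n)}d_{\lfloor\alpha\rfloor}(n)=\Phi_\alpha(n)$, which is immediate from \eqref{eq:multmixweight}, together with the fact that at integer $\alpha$ one has $c=1$ and the statement collapses to the corresponding anchor.
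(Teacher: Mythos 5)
Your proposal is correct and is essentially the argument the paper relies on: the paper does not prove this lemma itself but quotes it from \cite{BBSSZ}, describing the proof precisely as a ``completely multiplicative interpolation'' between the integer cases, which is exactly what your dilation operators $T_r$ (with $r^{\Omega(n)}$ completely multiplicative) combined with Weissler's inequality at the critical radius $\sqrt{p/q}$ implement. The one input you leave unproved---the tensorization of Weissler's contraction to several (hence, for Dirichlet polynomials, all relevant) variables---is supplied by iterating the one-variable bound of Lemma~\ref{lem:weissler} with Minkowski's inequality, exactly as the paper itself does in the proof of Theorem~\ref{thm:smallp}.
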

Clearly, if $\alpha$ is an integer, then $\Phi_\alpha(n) =d_\alpha(n)$. Let $\mu(n)$ denote the M\"obius function, which is $1$ if $n=1$, $(-1)^{\Omega(n)}$ if $n$ is square-free, and $0$ otherwise. Since
\[|\mu(n)| \Phi_\alpha(n) = |\mu(n)| d_\alpha(n) = |\mu(n)| \alpha^{\Omega(n)},\]
the average order of $\Phi_\alpha$ and $d_\alpha$ is (up to a constant) the same (see \cite[Lem.~7]{BBSSZ}).
Hence we find that
\begin{align}
	\sum_{n\leq x} \frac{d_\alpha(n)}{n} &= \frac{1}{\Gamma(\alpha+1)} (\log{x})^\alpha + O\left((\log{x})^{\alpha-1}\right), \label{eq:davg} \\
	\sum_{n\leq x} \frac{\Phi_\alpha(n)}{n} &= C_\alpha (\log{x})^{\alpha}+ O\left((\log{x})^{\alpha-1}\right). \label{eq:avgorder}
\end{align}
We are now ready to proceed with the proof of Theorem~\ref{thm:duality}.

\begin{proof}
	[Proof of Theorem~\ref{thm:duality} (a)] To begin with, we notice that \eqref{eq:multmix1} implies that
	\[\|\varphi_\beta\|_{\mathcal{H}^p}^2 \leq 1 + \sum_{n=2}^\infty \frac{\Phi_{p/2}(n)}{n(\log{n})^{2\beta}}.\]
	The series on the right-hand side is convergent when $2\beta>p/2$, by \eqref{eq:avgorder} and Abel summation, and we have thus proved that $\varphi_\beta$ is in $\Hp$ whenever $\beta>p/4$.
	
	To settle the case $\beta = p/4$, we set $k=\lfloor p\rfloor$, $q=p/k$, and
	\[ \log^* n=
	\begin{cases}
		\log n, & n>1 \\
		1, & n=1.
	\end{cases}
	\]
	 We only consider square-free integers in \eqref{eq:multmix2} to the effect that 
	\begin{align*}
		\|\varphi_\beta\|_{\mathcal{H}^p}^p = \|\varphi_\beta^k\|_{\mathcal{H}^q}^q &\geq \left(\sum_{n=1}^\infty \frac{|\mu(n)|}{d_{2/q}(n)}\,\frac{1}{n}\left|\sum_{n_1\cdots n_k = n} \frac{1}{(\log^*{n_1})^{\beta}\cdots (\log^*{n_k})^\beta}\right|^2\right)^\frac{q}{2} \\
		&\geq \left(\sum_{n=2}^\infty \frac{|\mu(n)|}{d_{2/q}(n)}\frac{[d_k(n)]^2}{n(\log{n})^{2k\beta}}\right)^\frac{q}{2} = \left(\sum_{n=2}^\infty \frac{|\mu(n)|d_{p[p]/2}(n)}{n(\log{n})^{p[p]/2}}\right)^\frac{q}{2}, 
	\end{align*}
	where we used thrice that $|\mu(n)|\Phi_\alpha(n) = |\mu(n)|d_\alpha(n) = |\mu(n)|\alpha^{\Omega(n)}$. To see that the final series is divergent, we use Abel summation and the estimate 
	\[\sum_{n\leq x} \frac{|\mu(n)|d_\alpha(n)}{n} = D_\alpha (\log{x})^{\alpha-1} + O\left((\log{x})^{\alpha-2}\right),\]
	which is \eqref{eq:avgorder} for squarefree numbers.
\end{proof}
\begin{proof}
	[Proof of Theorem~\ref{thm:duality} (b)] The first statement follows from \eqref{eq:multmix2}, since the Cauchy--Schwarz inequality gives that
	\[|\langle f, \varphi_\beta\rangle_{\mathcal{H}^2}| \leq \left(\sum_{n=1}^\infty \frac{|a_n|^2}{\Phi_{2/p}(n)}\right)^\frac{1}{2}\left(1 +\sum_{n=2}^\infty \frac{\Phi_{2/p}(n)}{n(\log{n})^{2\beta}}\right)^\frac{1}{2}.\]
	Abel summation again gives that the final sum is convergent if $2\beta>2/p$. For the second part, suppose that $\beta<1/p$ and set
	\[f(s) = \left(\prod_{p_j\leq N} \frac{1}{1-p_j^{-1/2-s}}\right)^{2/p}.\]
	Clearly, $\|f\|_{\mathcal{H}^p} \simeq (\log{N})^{1/p}$. We use Abel summation and \eqref{eq:davg} and find that
	\[\langle f, \varphi_\beta \rangle_{\mathcal{H}^2} \geq \sum_{n=2}^N \frac{d_{2/p}(n)}{n(\log{n})^\beta} \simeq (\log{N})^{2/p-\beta}.\]
	We conclude that
	\[\frac{\langle f, \varphi_\beta \rangle_{\mathcal{H}^2}}{\|f\|_{\mathcal{H}^p}} \simeq (\log{N})^{1/p-\beta}\]
	is unbounded as $N\to\infty$, since by assumption $\beta<1/p$. 
\end{proof}

The proof of Theorem~\ref{thm:duality} (b) does not provide any insight into the critical exponent $\beta=1/p$, except for the trivial case $p=2$. Let us collect some observations on this interesting problem. We begin by noting that
\begin{equation} \label{eq:integralfunc}
	\langle f, \varphi_\beta \rangle_{\mathcal{H}^2} = a_1 + \int_{1/2}^\infty \left(f(\sigma)-a_1\right)\left(\sigma-\frac{1}{2}\right)^{\beta-1}\,\frac{d\sigma}{\Gamma(\beta)}.
\end{equation}
The linear functional on $H^p(\mathbb{D})$ corresponding to \eqref{eq:integralfunc} is hence given by 
\begin{equation}\label{eq:boundaryint} 
	L_\beta(f) := \int_0^1 f(r) \left(1-r\right )^{\beta-1}\,\frac{dr}{\Gamma(\beta)}. 
\end{equation}
A computation with the Beta integral gives that $L_\beta(f) = \langle f, \psi_\beta \rangle_{H^2(\mathbb{D})}$ with 
\begin{equation} \label{eq:betadisc}
	\psi_\beta(z) = \sum_{j=0}^\infty \frac{\Gamma(j+1)}{\Gamma(j+1+\beta)} z^j.
\end{equation}
We note that $\Gamma(j+1)/\Gamma(j+1+\beta) \simeq_\beta (j+1)^{-\beta}$ and compile the following result: 
\begin{theorem}\label{thm:dualitydisc} 
	Let $\psi_\beta$ be as in \eqref{eq:betadisc}. Then 
	\begin{enumerate}
		\item[(a)] If $1<p<\infty$, then $\psi_\beta$ is in $\left(H^p(\mathbb{D})\right)^\ast = H^{p/(p-1)}(\mathbb{D})$ if and only if $\beta>1/p$. 
		\item[(b)] If $p\leq 1$, then $\psi_\beta$ is in $\left(H^p(\mathbb{D})\right)^\ast$ if and only if $\beta\geq1/p$. Moreover, if $\beta\geq1$, then $\psi_\beta$ is in $H^p(\mathbb{D})$ for every $p<\infty$. 
	\end{enumerate}
\end{theorem}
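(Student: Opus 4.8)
The plan is to reduce everything to the asymptotics of the Taylor coefficients $c_j := \Gamma(j+1)/\Gamma(j+1+\beta)$ of $\psi_\beta$. These are positive and non-increasing, since $c_{j+1}/c_j = (j+1)/(j+1+\beta) < 1$, and they satisfy $c_j \simeq_\beta (j+1)^{-\beta}$ as already noted. Once this is in hand, each assertion follows by identifying the relevant dual space and invoking the Hardy--Littlewood characterization of power series with non-negative non-increasing coefficients. The whole argument is thus a matter of matching a monotone-coefficient condition against the single asymptotic $c_j \simeq (j+1)^{-\beta}$.

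For part (a) I would use the classical Riesz duality $\left(H^p(\mathbb{D})\right)^\ast = H^q(\mathbb{D})$ with $q = p/(p-1)$, so that $\psi_\beta$ lies in the dual if and only if $\psi_\beta \in H^q$. By the Hardy--Littlewood theorem for monotone coefficients, $\sum_j c_j z^j \in H^q$ if and only if $\sum_j (j+1)^{q-2} c_j^q < \infty$; inserting $c_j \simeq (j+1)^{-\beta}$ turns this into convergence of $\sum_j (j+1)^{q-2-\beta q}$, i.e. $q(1-\beta) < 1$, which is exactly $\beta > 1/p$. The very same computation, now with $\beta \geq 1$ forcing $q(1-\beta)\leq 0 < 1$ for \emph{every} finite exponent $q$, yields the ``moreover'' clause of part (b) that $\psi_\beta \in H^q(\mathbb{D})$ for all $q < \infty$.

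For part (b) I would again replace the abstract dual by a concrete space and read off a coefficient condition. When $p = 1$ the relevant space is $BMOA$, and for non-increasing coefficients one has $\sum_j c_j z^j \in BMOA$ if and only if $c_j = O(1/j)$ (the mechanism being that for such series $BMOA$ coincides with the Bloch space, which is governed by $c_j = O(1/j)$; the borderline example is $\psi_1(z) = z^{-1}\log\frac{1}{1-z}$). With $c_j \simeq (j+1)^{-\beta}$ this reads $\beta \geq 1 = 1/p$. When $0 < p < 1$ the relevant space is the analytic Lipschitz space $\Lambda_\gamma$ with $\gamma = 1/p - 1$ (Duren--Romberg--Shields duality), taken in the derivative-growth form $|g'(z)| \lesssim (1-|z|)^{\gamma-1}$ and characterized for non-increasing coefficients by $c_j = O(j^{-1-\gamma})$; inserting $c_j \simeq (j+1)^{-\beta}$ gives $\beta \geq 1 + \gamma = 1/p$. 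In both cases the pairing realizing the duality agrees on polynomials with $\langle \,\cdot\,, \psi_\beta\rangle_{H^2}$, so that membership of $\psi_\beta$ in $\left(H^p(\mathbb{D})\right)^\ast$ is equivalent to membership of $\psi_\beta$ in the concrete space. Crucially, the conditions $c_j = O(1/j)$ and $c_j = O(j^{-1-\gamma})$ are \emph{closed}, which is precisely why the critical exponent $\beta = 1/p$ is now \emph{included}, in sharp contrast to the summability condition that governs part (a).

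The routine asymptotic $c_j \simeq (j+1)^{-\beta}$ and the coefficient computations are straightforward; the two points that require genuine care are the following. First, one must verify that the abstract duality pairings (the Riesz pairing for $q > 1$, and the Duren--Romberg--Shields and Fefferman pairings for $p \leq 1$) really do restrict on the dense subspace of polynomials to $\langle\,\cdot\,,\psi_\beta\rangle_{H^2}$, so that ``$\psi_\beta$ lies in the dual'' is genuinely equivalent to ``$\psi_\beta$ lies in the concrete space''. Second, one must locate the monotone-coefficient characterizations of $H^q$, $BMOA$ and $\Lambda_\gamma$ in the correct generality; I expect this to be the main obstacle, especially for $\Lambda_\gamma$ with $\gamma \geq 1$, which occurs once $p \leq 1/2$. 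The derivative-growth formulation $|g'(z)| \lesssim (1-|z|)^{\gamma-1}$ is the device that lets one state the single monotone characterization $c_j = O(j^{-1-\gamma})$ uniformly across the whole range $\gamma > 0$, thereby avoiding a case split at integer values of $\gamma$.
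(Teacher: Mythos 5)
Your part (a) is correct and is exactly the paper's argument (Riesz duality plus the Hardy--Littlewood monotone-coefficient theorem), and your treatment of $p=1$ via $\left(H^1(\mathbb{D})\right)^\ast=\mathrm{BMOA}$ is a sound alternative to the paper's citation of Duren: necessity of $c_j=O(1/j)$ follows from $\mathrm{BMOA}\subset\mathcal{B}$ (Bloch) and your monotone estimate, and sufficiency is immediate from Hardy's inequality $\sum_j|b_j|/(j+1)\lesssim\|g\|_{H^1(\mathbb{D})}$, without even needing monotonicity. The genuine gap is in the range $p\leq 1/2$, i.e. $\gamma=1/p-1\geq 1$ --- precisely the place you flagged as the main obstacle and then claimed to dispose of. The first-derivative growth condition $|g'(z)|\lesssim(1-|z|)^{\gamma-1}$ does \emph{not} define $\Lambda_\gamma$ there. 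For $\gamma>1$ it defines only the constants: $\max_{|z|=r}|g'(z)|$ is nondecreasing in $r$ by the maximum principle, while your bound forces it to tend to $0$ as $r\to1$, so $g'\equiv 0$. At $\gamma=1$ (i.e. $p=1/2$) it says $g'$ is bounded, which is strictly smaller than the Zygmund class that actually represents $\left(H^{1/2}(\mathbb{D})\right)^\ast$, and the monotone characterization you need fails on it: the critical function $\psi_2$ has $c_j\simeq j^{-2}$ but $\psi_2'(r)\simeq\log\frac{1}{1-r}$, so it satisfies your coefficient condition while violating your growth condition. Thus the equivalence ``$c_j=O(j^{-1-\gamma})$ iff membership in your space'' breaks exactly where the theorem needs it (indeed, followed literally, your argument would deny that $\psi_2\in\left(H^{1/2}(\mathbb{D})\right)^\ast$, contradicting part (b) at the endpoint), and the claim that the first-derivative form avoids a case split at integer $\gamma$ is backwards.

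The repair is standard and is in substance what the paper does by combining Duren's Theorem 7.5 with the examples on p.~90 of that book: represent $\left(H^p(\mathbb{D})\right)^\ast$ for $0<p<1$ by the Lipschitz--Zygmund class $\Lambda_\gamma$ in the higher-derivative form $|g^{(k)}(z)|\lesssim(1-|z|)^{\gamma-k}$ for an integer $k>\gamma$ (say $k=\lfloor\gamma\rfloor+1$), which is valid uniformly in $\gamma>0$ and gives the Zygmund-type classes at integer $\gamma$. For nonnegative nonincreasing coefficients your two-sided estimate for $k=1$ goes through verbatim for general $k$: the upper bound $\sum_j j^{k-1-\gamma}r^{j}\lesssim(1-r)^{\gamma-k}$ when $c_j\lesssim j^{-1-\gamma}$, and the lower bound $g^{(k)}(r)\gtrsim c_N N^{k+1}$ at $r=1-1/N$, yielding the characterization $c_j=O(j^{-1-\gamma})$ for every $\gamma>0$. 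Inserting $c_j\simeq(j+1)^{-\beta}$ then gives membership if and only if $\beta\geq 1+\gamma=1/p$, endpoint included, and with that substitution your proof is complete.
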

\begin{proof}
	We begin with (a). That $\left(H^p(\mathbb{D})\right)^\ast = H^{p/(p-1)}(\mathbb{D})$ for $1<p<\infty$ is well-known (see \cite{Duren}). We will investigate when $\psi_\beta$ is in $H^{p/(p-1)}(\mathbb{D})$. To do this, we use a result of Hardy and Littlewood \cite{HL31}: If $f(z) = \sum_{j=0}^\infty a_j z^j$ has positive and decreasing coefficients and $1<q<\infty$, then
	\[\|f\|_{H^q(\mathbb{D})} \simeq_q \left(\sum_{j=0}^\infty (j+1)^{q-2}a_j^q \right)^\frac{1}{q}.\]
	Setting $q=p/(p-1)$ we find that
	\[\|\psi_\beta\|_{H^q\mathbb{D})}^q \simeq_q \sum_{j=0}^\infty (j+1)^{\frac{p}{p-1}(1-\beta)-2},\]
	which is finite if and only if $\beta>1/p$.
	
	For (b), we begin with the case $\beta=1$. A stronger version of our statement can be found in \cite[Thm.~4.5]{Duren}. It is also clear that since $\psi_1$ is in $\left(H^1(\mathbb{D})\right)^\ast$, $\psi_1$ is in $H^p(\mathbb{D})$ for every $p<\infty$.
	
	To investigate the case $p<1$, we require the main result in \cite{DRS69} for which we refer to \cite{Duren}. We conclude that $\psi_\beta \in \left(H^p(\mathbb{D})\right)^\ast$ if and only if $\beta\leq 1/p$ by combining \cite[Thm.~7.5]{Duren} with \cite[Ex.~1 and Ex.~3 on p. 90]{Duren}. If $\beta<1$, then $\psi_\beta$ is a bounded function, so $\psi_\beta$ is in $H^p(\mathbb{D})$ for every $p<\infty$. 
\end{proof}

In analogy with Theorem~\ref{thm:dualitydisc}, we therefore offer the following conjecture. 
\begin{conj}
	Let $0<p\leq2$. The Dirichlet series $\varphi_{1/p}$ from \eqref{eq:betafunc} defines a bounded linear functional on $\mathcal{H}^p$ if and only if $0<p\leq1$. 
\end{conj}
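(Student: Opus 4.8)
The plan is to treat the three regimes $0<p<1$, $p=1$, and $1<p\le 2$ separately, since $\beta=1/p$ is precisely the critical exponent left undecided by Theorem~\ref{thm:duality}(b): there the Cauchy--Schwarz bound built from \eqref{eq:multmix2} converges exactly when $2\beta>2/p$, and at $\beta=1/p$ the resulting series $\sum_n \Phi_{2/p}(n)/(n(\log n)^{2/p})$ diverges only by a $\log\log$ factor (after Abel summation against \eqref{eq:avgorder}). Thus every regime requires an argument strictly finer than the Hardy--Littlewood inequalities of Lemma~\ref{lem:multmixedineqs} used so far.

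For the range $1<p\le 2$ I would aim to prove unboundedness. The natural first attempt, the test functions $f_N(s)=\big(\prod_{p_j\le N}(1-p_j^{-1/2-s})^{-1}\big)^{2/p}$ from the proof of Theorem~\ref{thm:duality}(b), turns out to be inconclusive at criticality: using the integral representation \eqref{eq:integralfunc} together with the fact that $f_N(\sigma)$ saturates at size $(\log N)^{2/p}$ for $\sigma-1/2\lesssim 1/\log N$ and matches $\zeta(\sigma+1/2)^{2/p}\simeq(\sigma-1/2)^{-2/p}$ beyond, one finds that both $\langle f_N,\varphi_{1/p}\rangle$ and $\|f_N\|_{\Hp}$ are of order $(\log N)^{1/p}$, so the quotient stays bounded for every $p$. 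The refinement I would pursue is to insert a slowly varying correction—for instance a superposition of the $f_N$ over dyadic scales of $\log N$, or a factor concentrating the mass near $\sigma=1/2$—designed to harvest the logarithmic divergence; the guiding target is the disc statement Theorem~\ref{thm:dualitydisc}(a), where the failure at $\beta=1/p$ comes precisely from the harmonic series $\sum_j(j+1)^{-1}$, the one-variable shadow of the $\log\log$ divergence above.

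For $0<p<1$ the goal is boundedness, and here I would work directly from \eqref{eq:integralfunc}, reducing matters to the weighted bound $\int_{1/2}^\infty |f(\sigma)|(\sigma-1/2)^{1/p-1}\,d\sigma\lesssim \|f\|_{\Hp}$. The pointwise Cole--Gamelin estimate \eqref{eq:CGestimate} gives $|f(\sigma)|\lesssim(\sigma-1/2)^{-1/p}\|f\|_{\Hp}$, which makes the integrand of size $(\sigma-1/2)^{-1}\|f\|_{\Hp}$ and hence diverges logarithmically; the crux is to recover this logarithm. The mechanism should be that $f$ cannot sustain the extremal growth $\zeta(2\sigma)^{1/p}$ on a set of $\sigma$ of positive logarithmic length near $1/2$ without inflating $\|f\|_{\Hp}$, so I would replace the pointwise bound by an averaged (square-function or Carleson-measure) estimate over the strip $1/2<\sigma<1$, in the spirit of the endpoint Lipschitz-space description of $(H^p(\mathbb{D}))^\ast$ that underlies Theorem~\ref{thm:dualitydisc}(b). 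Success would hinge on producing an adequate substitute for that Lipschitz-space duality in the Dirichlet setting, which is not currently available.

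The main obstacle, however, is the endpoint $p=1$: the assertion that $\varphi_1$ lies in $(\Ho)^\ast$ is exactly the question of Helson recalled in \eqref{eq:hilbert} and asked in \cite{Hilbert16}, which remains open. Consequently the ``if'' direction of the conjecture cannot be completed by any of the above without simultaneously resolving that problem, and one should expect the $0<p<1$ and $p=1$ analyses to be genuinely different in difficulty—the former possibly within reach of a refined Carleson-type estimate, the latter being the principal open problem in this circle of ideas.
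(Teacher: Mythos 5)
The statement you were asked to prove is stated in the paper as a \emph{conjecture}, and the paper contains no proof of it: the authors offer it in analogy with Theorem~\ref{thm:dualitydisc} precisely because they cannot settle the critical exponent $\beta=1/p$, and the endpoint $p=1$ is exactly Helson's question \eqref{eq:hilbert} from \cite{Hilbert16}, which the paper explicitly says remains open. Your proposal correctly recognizes this: you do not claim a proof, you identify the $p=1$ case as the principal open problem, and you flag that the ``if'' direction cannot be completed without resolving it. So there is no paper proof to compare against, and no gap in your write-up beyond the one you yourself name --- which is the genuine, unresolved mathematical obstruction.

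Your partial observations are accurate and consistent with the paper's own discussion. The claim that at $\beta=1/p$ the Cauchy--Schwarz bound built from \eqref{eq:multmix2} fails only by a $\log\log$ divergence (Abel summation against \eqref{eq:avgorder} gives $\sum_n \Phi_{2/p}(n)/(n(\log n)^{2/p}) \approx \int dx/(x\log x)$) is correct, and so is your computation that the test functions from the proof of Theorem~\ref{thm:duality}(b) are inconclusive at criticality, since both $\langle f_N,\varphi_{1/p}\rangle_{\Ht}$ and $\|f_N\|_{\Hp}$ are of order $(\log N)^{1/p}$. Where you diverge from the paper is in the obstructions you emphasize: the paper's post-conjecture discussion centers on the failure of the half-plane embedding \eqref{eq:embedding} for $0<p<2$ (by Harper's result in \cite{Har2}), which blocks the natural positive argument for $0<p\leq 1$, and on the fact that interpolation in $\Hp$ is only known for $p=2/k$, which blocks the natural negative argument for $1<p<2$. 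Your alternative proposed routes --- a Carleson-measure or square-function refinement of the Cole--Gamelin bound \eqref{eq:CGestimate} for $p<1$, and a dyadic superposition of the multiplicative test functions for $1<p\leq 2$ --- are speculative but not in conflict with anything known, and they are reasonable directions; just be aware that neither has been carried out, and that for $p=1$ any successful positive argument would resolve the question of \cite{Hilbert16}.
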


One possible way to approach to this conjecture is to translate Theorem~\ref{thm:dualitydisc} to the Hardy spaces of the half-plane $\mathbb{C}_{1/2}$. For $0<p<\infty$, set
\[\|f\|_{H^p_{\operatorname{i}}(\mathbb{C}_{1/2})}^p := \frac{1}{\pi}\int_{-\infty}^\infty |f(1/2+it)|^p \,\frac{dt}{1+t^2}.\]
From Theorem~\ref{thm:dualitydisc} and a standard computation using a linear fractional mapping from $\mathbb{C}_{1/2}$ to $\mathbb{D}$, we find that the inequality
\[\left|\int_{1/2}^{3/2} f(\sigma) \,\left(\sigma-\frac{1}{2}\right)^{1/p-1}d\sigma\right| \leq C_p \|f\|_{H^p_{\operatorname{i}}(\mathbb{C}_{1/2})}\]
holds if and only if $0<p\leq 1$. 

One could hope to settle both the positive (for $0<p\leq1$) and negative (for $1<p\leq2$) part of the conjecture by relating $H^p_{\operatorname{i}}(\mathbb{C}_{1/2})$ to $\mathcal{H}^p$. The most direct approach along these lines would be to employ the embedding and interpolation results discussed in Section~2.5, respectively. 

However, a recent result by Harper \cite{Har2} shows that the embedding
\begin{equation} \label{eq:embedding}
	\|f\|_{H^p_{\operatorname{i}}(\mathbb{C}_{1/2})} \leq C_p \|f\|_{\mathcal{H}^p}
\end{equation}
does not hold for $0<p<2$. This means that a positive result in the range $0<p\leq1$ cannot be obtained by using \eqref{eq:embedding}. However, it should be noted that the result from \cite{Har2} does not yield any conclusion regarding our conjecture. Moreover, the fact that we only have interpolation results for $\mathcal{H}^{2/k}$ when $k=1,2,3,\ldots$ means we cannot extract the conjectured negative result in the range $1<p<2$ either. 

We end this section by establishing a weaker result, which may serve as a replacement for \eqref{eq:embedding} in certain settings (see \cite{BB16}). For $\alpha>1$, set
\[\|f\|_{A^2_{\alpha,\operatorname{i}}(\mathbb{C}_{1/2})} := \left(\int_{\mathbb{C}_{1/2}}|f(s)|^2\,(\alpha-1)\left(\sigma - \frac{1}{2}\right)^{\alpha-2}\,\frac{4^{\alpha-1}dm(s)}{\pi|s+1/2|^{2\alpha}}\right)^\frac{1}{2}.\]
The following result is an extension of \cite[Thm.~1]{BB16} from $1\leq p < 2$ to the full range $0<p<2$. 
\begin{corollary}\label{cor:embedding} 
	Let $0<p<2$. There is a constant $C_p\geq1$ such that
	\[\|f\|_{A^2_{2/p,\operatorname{i}}(\mathbb{C}_{1/2})} \leq C_p \|f\|_{\mathcal{H}^p}\]
	for every $f \in \mathcal{H}^p$. The parameter $\alpha=2/p$ is optimal. 
\end{corollary}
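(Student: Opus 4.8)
I would first reduce, by density of Dirichlet polynomials in $\Hp$ (Theorem~\ref{th:Abschnitte} and the remarks following it) together with the pointwise Cole--Gamelin bound, to proving
\[\int_{\mathbb{C}_{1/2}}|f(s)|^2\,d\nu_{2/p}(s)\le C_p^2\,\|f\|_{\Hp}^2,\qquad d\nu_\alpha(s):=(\alpha-1)\Big(\sigma-\tfrac12\Big)^{\alpha-2}\frac{4^{\alpha-1}\,dm(s)}{\pi|s+1/2|^{2\alpha}},\]
with $C_p$ independent of the polynomial; equivalently, it suffices to bound $\|A_mf\|_{A^2_{2/p,\operatorname{i}}(\mathbb{C}_{1/2})}\le C_p\|A_mf\|_{\Hp}\le C_p\|f\|_{\Hp}$ uniformly in the number of variables $m$. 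The starting point is that the conformal map $w=(s-3/2)/(s+1/2)$ sends $\mathbb{C}_{1/2}$ onto $\mathbb{D}$ and, because $1-|w|^2=4(\sigma-1/2)/|s+1/2|^2$, pulls the standard weighted Bergman measure $(\alpha-1)(1-|w|^2)^{\alpha-2}\,dm(w)/\pi$ back precisely to $d\nu_\alpha$. Thus $\|\cdot\|_{A^2_{\alpha,\operatorname{i}}(\mathbb{C}_{1/2})}$ is a weighted Bergman norm, and the one-variable engine is the classical Hardy--Littlewood/Carleson embedding: $\nu_\alpha$ obeys the box estimate $\nu_\alpha(S(I))\simeq|I|^{\alpha}$ over Carleson boxes $S(I)$ based on $\operatorname{Re}s=1/2$, so that on any single analytic function it realizes $H^{q}(\mathbb{C}_{1/2})\hookrightarrow L^2(\nu_\alpha)$ exactly at the critical exponent $\alpha=2/q$. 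This is what pins the value $\alpha=2/p$ and is the concrete manifestation of the $4/p$ symmetry.

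The reason this cannot simply be applied to $f$ is Harper's theorem \cite{Har2}: the right-hand side of the one-variable embedding is the boundary norm $\|f\|_{H^p_{\operatorname{i}}(\mathbb{C}_{1/2})}$, which is \emph{not} dominated by $\|f\|_{\Hp}$. The estimate must therefore be extracted from the interior, exploiting that $d\nu_\alpha$ is a genuinely two-dimensional weight. The natural route, adapting \cite[Thm.~1]{BB16}, is to split $|f|^2=|f|^{2-p}\,|f|^p$ and treat the factors separately: $|f|^p$ is subharmonic and one controls its $d\nu_{2/p}$-mass by sub-mean-value averages over Carleson boxes, where the exponent $\alpha-2=2/p-2$ makes the scales match; the factor $|f|^{2-p}$ is then absorbed through H\"older's inequality with exponents $2/(2-p)$ and $2/p$ together with a maximal function estimate. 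The $L^p$-boundedness of the radial maximal function on $H^p(\mathbb{D}^\infty)$ for all $p>0$, recorded in Section~\ref{se:prel}, supplies the analytic input needed when $0<p<1$, where the triangle inequality used in \cite{BB16} must give way to the quasi-norm estimates of Lemmas~\ref{le:4} and~\ref{pr:1}.

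The step I expect to be the main obstacle is making this transfer \emph{uniform in the number of variables} $m$. Under the Bohr lift the vertical direction $t\mapsto(p_j^{-\operatorname{i}t})_j$ is the Kronecker flow, and approaching the line $\operatorname{Re}s=1/2$ forces the coordinates to contract at the unequal rates $p_j^{-\sigma}$, with the genuine boundary point $(p_j^{-1/2})_j$ lying outside $\ell^2\cap\mathbb{D}^\infty$; the maximal operator naturally called for is therefore of product (strong) type, and its naive norm degenerates as $m\to\infty$. The mechanism that must rescue a dimension-free bound is that the integrable weight $(\sigma-1/2)^{2/p-2}$ tames the boundary growth quantified by the Cole--Gamelin inequality $|f(\sigma+\operatorname{i}t)|^p\le\zeta(2\sigma)\|f\|_{\Hp}^p$, precisely where the boundary $H^p_{\operatorname{i}}$-norm does not; turning this into a quantitative, $m$-independent estimate is the technical heart of the argument.

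For the optimality of $\alpha=2/p$ I would test with $f_N(s)=\big(\prod_{p_j\le N}(1-p_j^{-1/2-s})^{-1}\big)^{2/p}$, as in the proof of Theorem~\ref{thm:duality}(b). Here $\|f_N\|_{\Hp}^p=\prod_{p_j\le N}(1-p_j^{-1})^{-1}\simeq\log N$, whereas near $s=1/2$ the function peaks at height $\simeq(\log N)^{2/p}$ across a box of side $\simeq1/\log N$, giving $\|f_N\|_{A^2_{\alpha,\operatorname{i}}(\mathbb{C}_{1/2})}^2\gtrsim(\log N)^{4/p-\alpha}$. Hence $\|f_N\|_{A^2_{\alpha,\operatorname{i}}}/\|f_N\|_{\Hp}\gtrsim(\log N)^{1/p-\alpha/2}$ is unbounded once $\alpha<2/p$, so no smaller exponent is admissible; this is the same threshold as the failure of the Carleson condition, since $\nu_\alpha(S(I))\simeq|I|^\alpha$ exceeds $|I|^{2/p}$ for small $|I|$ whenever $\alpha<2/p$.
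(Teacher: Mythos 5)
Your optimality argument is essentially sound: it is a truncated-Euler-product variant of the paper's own test function $\zeta^{2/p}(1/2+\varepsilon+s)$ (with $\varepsilon\leftrightarrow 1/\log N$), and the ``height times box-measure'' computation goes through, though the claimed peak height $(\log N)^{2/p}$ on a box of side $1/\log N$ degrades by a constant in the exponent unless you shrink the box to side $c/\log N$ and let $c$ depend on $2/p-\alpha$. The positive half of the corollary, however, is not proved in your proposal. You correctly observe that Harper's theorem \cite{Har2} blocks any route through the boundary norm $\|f\|_{H^p_{\operatorname{i}}(\mathbb{C}_{1/2})}$, but the replacement you sketch (splitting $|f|^2=|f|^{2-p}\,|f|^p$, subharmonicity over Carleson boxes, H\"older, and a product-type maximal function) is abandoned at exactly the decisive point: you yourself flag the uniformity in the number of variables $m$ as ``the technical heart'' and offer only the hope that the weight $(\sigma-1/2)^{2/p-2}$ together with the Cole--Gamelin bound will produce a dimension-free estimate. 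Nothing in the proposal converts that hope into an $m$-independent inequality, and it is far from clear that this interior, function-theoretic route can be carried out at all; as you note, the natural maximal operator degenerates as $m\to\infty$.

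The paper's proof sidesteps interior estimates entirely and is genuinely multiplicative: it works at the level of coefficients. By the Hardy--Littlewood inequality \eqref{eq:multmix2} of Lemma~\ref{lem:multmixedineqs}, one has $\bigl(\sum_{n\geq1}|a_n|^2/\Phi_{2/p}(n)\bigr)^{1/2}\leq\|f\|_{\Hp}$ for $0<p\leq2$, so it suffices to embed the \emph{Hilbert} space $\mathcal{H}_\alpha$ of Dirichlet series with $\sum_{n\geq1}|a_n|^2/\Phi_\alpha(n)<\infty$ into $A^2_{\alpha,\operatorname{i}}(\mathbb{C}_{1/2})$ for $\alpha=2/p>1$. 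That embedding is precisely \cite[Thm.~1]{Olsen11}, whose hypothesis is the average-order estimate \eqref{eq:avgorder}, namely $\sum_{n\leq x}\Phi_\alpha(n)/n\simeq(\log x)^{\alpha}$. Thus the entire dimension-uniformity problem you wrestle with is absorbed into a reduction to a known Hilbert-space result, which is why the statement is a corollary rather than a theorem. The missing idea in your proposal is exactly this use of \eqref{eq:multmix2}: the multiplicative coefficient inequality, not a maximal-function argument, is what carries the $\mathcal{H}^p$ information across Harper's obstruction.
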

\begin{proof}
	Define $\mathcal{H}_\alpha$ as the Hilbert space of Dirichlet series $f(s) = \sum_{n=1}^\infty a_n n^{-s}$ that satisfy
	\[\|f\|_{\mathcal{H}_\alpha} := \left(\sum_{n=1}^\infty \frac{|a_n|^2}{\Phi_\alpha(n)}\right)^\frac{1}{2} < \infty.\]
	Here it is crucial that $\Phi_\alpha$ is strictly positive. By \eqref{eq:avgorder} and \cite[Thm.~1]{Olsen11} it follows that there is some $C_\alpha$ such that
	\[\|f\|_{A^2_{\alpha,\operatorname{i}}(\mathbb{C}_{1/2})} \leq C_\alpha \|f\|_{\mathcal{H}_\alpha},\]
	whenever $\alpha>1$. The proof of the first statement is completed using \eqref{eq:multmix2}. For the proof that $\alpha=2/p$ is optimal, we can follow the argument given in the proof of \cite[Thm.~1]{BB16}. We set
	\[f_{p,\varepsilon}(s) = \zeta^{2/p}(1/2+\varepsilon+s) = \sum_{n=1}^\infty \frac{d_{2/p}(n)}{n^{1/2+\varepsilon}}\,n^{-s},\]
	which satisfies
	\[f_{p,\varepsilon}(s) = \left(\frac{1}{1/2+\varepsilon+s-1}\right)^{2/p} + O\left( |1/2+\varepsilon+s-1|^{-(2/p-1)}\right)\]
	when $1/2 < \mre s=\sigma < 3/2$ and $0 < \mim s=t < 1$. Then clearly
	\[\|f_{p,\varepsilon}\|_{A^2_{\alpha,\operatorname{i}}(\mathbb{C}_{1/2})}^2 \gtrsim \int_{1/2}^{3/2} \int_{0}^1 \left|\frac{1}{\sigma-1/2+\varepsilon+it}\right|^\frac{4}{p}\,\left(\sigma-\frac{1}{2}\right)^{\alpha-2} \,dtd\sigma \gtrsim \varepsilon^{\alpha-4/p}.\]
	Since $\|f_{p,\varepsilon}\|_{\mathcal{H}^p}^2 \simeq \varepsilon^{-2/p}$, we get that $\alpha-4/p\geq-2/p$ is necessary. 
\end{proof}

\section{Coefficient estimates}\label{se:coeff} 
We now turn to weighted $\ell^\infty$ estimates of the coefficient sequence $(a_n)_{n\geq 1}$ for elements $f(s)=\sum_{n=1}^\infty a_n n^{-s}$ in $\Hp$. Phrased differently, we are interested in estimating the norm of the linear functional $f \mapsto a_n$ for every $n\ge 1$, i.e., the quantity
\[ \mathcal{C}(n,p):= \sup_{\|f\|_p=1} |a_n|. \]
When $p\ge 1$, $a_n$ can be expressed as a Fourier coefficient, implying that this norm is trivially $1$ for all $n$. We will therefore mainly be concerned with the case $0<p<1$.

Our first observation is that it suffices to deal with the one-dimensional situation because the general estimates will appear by multiplicative extension. Before we prove this claim, we recall what is known about the coefficients of $f(z)=\sum_{k=0}^\infty a_k z^k$ in $H^p(\D)$ when $0<p<1$. For $0<p<\infty$ and $k\geq1$, we set 
\begin{equation}\label{eq:cnp} 
	C(k,p):=\displaystyle \sup\left\{\frac{|f^{(k)}(0)|}{k!} \, : \, \| f\|_{H^p(\D)}=1\right\}. 
\end{equation}
By a classical result \cite[p.~98]{Duren}, $C(k,p)\lesssim k^{1/p-1}$ and $a_k =o(k^{1/p-1})$ for an individual function in $H^p(\D)$ when $0<p<1$. By a normal family argument, there are extremal functions $f_k$ in $H^p(\mathbb{D})$ for \eqref{eq:cnp}.

Turning to the multiplicative extension, we begin by noting that it suffices to consider an arbitrary polynomial
\[F(z)=\sum_{\kappa} c_\kappa z^{\kappa}\]
on $\T^\infty$ and to estimate the size of $c_\kappa$ for an arbitrary multi-index $\kappa=(\kappa_1,\ldots,\kappa_m, 0,0 ,\ldots)$. Recall that $A_m F$ denotes the $m$te Abschnitt of $F$. For $0<p<1$ we use \eqref{eq:03} to find that 
\begin{align*}
	|c_\kappa|^p & =\left| \int_{\T^m} A_m F(z) \overline{z_1}^{\kappa_1}\cdots \overline{z_m}^{\kappa_m} d\mu_m \right|^p \\
	&\le C(\kappa_m,p)^p \int_{\T} \left| \int_{\T^{m-1}} A_mF(z) \overline{z_1}^{\kappa_1}\cdots \overline{z_{m-1}}^{\kappa_{m-1}} d\mu_{m-1}\right|^p d\mu_1 \\
	& \le C(\kappa_1,p)^p \cdots C(\kappa_m,p)^p \| A_mF\|_p^p \\
	& \le C(\kappa_1,p)^p \cdots C(\kappa_m,p)^p \| F\|_p^p. 
\end{align*}
This is a best possible estimate because if $f_k$ in $H^p(\D)$ satisfies $|a_k|/\| f_k \|_p= C(k,p)$, then clearly the function
\[\prod_{j=1}^m f_{\kappa_j} (z_j) \]
will be extremal with respect to the multi-index $\kappa=(\kappa_1,\ldots,\kappa_m)$. Hence we conclude that $n\mapsto \mathcal{C}(n,p)$ is a multiplicative function that takes the value $C(k,p)$ at $n=p_j^k$ for every prime $p_j$.

To the best of our knowledge, the exact values of $C(k,p)$ from \eqref{eq:cnp} have not been computed previously for any $k\geq 1$ when $0<p<1$, and we have therefore made an effort to improve this situation. We begin with the case $k=1$ which is settled by the following theorem. Note that (b) below is a special case of \cite[Ex.~2 on p.~143]{Duren}.
\begin{theorem}\label{le:1} 
	We have 
	\begin{equation}\label{eq:cn1} 
		C(1,p)=1 \quad \text{if}\quad p\geq1,\quad\text{and}\quad C(1,p)=\sqrt{\frac{2}{p}}\left(1-\frac{p}{2}\right)^{\frac{1}{p}-\frac{1}{2}}\quad \text{if}\quad 0<p<1. 
	\end{equation}
	The corresponding extremals $($modulo the trivial modifications $f(z)\mapsto e^{i\theta_1}f(e^{i\theta_2}z))$ are 
	\begin{itemize}
		\item[(a)] $f(z)=z$ for $p>1$; 
		\item[(b)] the family $f_a(z)=\left(a+\sqrt{1-a^2}z\right)\left(\sqrt{1-a^2}+az\right)$ with $0\le a\le 1$ for $p=1$; 
		\item[(c)] $f(z)= \left(\sqrt{1-p/2}+z\sqrt{p/2}\right)^{2/p}$ for \ $0<p<1$. 
	\end{itemize}
\end{theorem}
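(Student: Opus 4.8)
The plan is to split off the elementary range $p\ge1$ and to concentrate the real work on $0<p<1$, where I would reduce the extremal problem first to zero-free (outer) functions and then to a two-coefficient problem in $H^2(\D)$. Throughout, $C(1,p)$ denotes the quantity in \eqref{eq:cnp}.

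For $p\ge1$ the first Taylor coefficient is a Fourier coefficient, so
\[|a_1|=\left|\int_\T f^*(z)\,\overline{z}\,d\mu(z)\right|\le\|f\|_{H^1(\D)}\le\|f\|_{H^p(\D)},\]
the last step by the monotonicity of $L^p$-norms on a probability space. Since $f(z)=z$ gives equality, $C(1,p)=1$. For $p>1$, equality in the second estimate forces $|f^*|$ to be constant, and equality in the first then forces $f(z)=cz$, giving uniqueness. For $p=1$ only the first estimate is used: equality means $f^*(z)=z\rho(z)$ with $\rho\ge0$, and the requirement $f\in H^1(\D)$ forces $\rho$ to be a nonnegative trigonometric polynomial of degree $\le1$ with $\int_\T\rho\,d\mu=1$. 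A Fejér–Riesz factorization $\rho=|c_0+c_1z|^2$ then produces exactly the family $f_a$ after normalizing the coefficients to be real; the freedom in the factorization is the source of the non-uniqueness at $p=1$.

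For $0<p<1$ I would first treat zero-free (outer) $f$. Writing $g=f^{p/2}\in H^2(\D)$, one has $\|g\|_{H^2(\D)}=\|f\|_{H^p(\D)}^{p/2}$ and $f'(0)=\tfrac2p\,g(0)^{2/p-1}g'(0)$. Normalizing $\|f\|_{H^p(\D)}=1$, the problem becomes to maximize $\tfrac2p|g(0)|^{2/p-1}|g'(0)|$ subject to Bessel's inequality $|g(0)|^2+|g'(0)|^2\le1$. The maximum is attained when $g$ is linear, so with $t=|g(0)|^2$ one maximizes $\tfrac2p\,t^{1/p-1/2}(1-t)^{1/2}$; its logarithmic derivative vanishes at $t=1-p/2$, yielding the value $C(1,p)=\sqrt{2/p}\,(1-p/2)^{1/p-1/2}$ and the extremal $g(z)=\sqrt{1-p/2}+\sqrt{p/2}\,z$, i.e. $f=(\sqrt{1-p/2}+\sqrt{p/2}\,z)^{2/p}$. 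One checks directly that this $f$ has $\|f\|_{H^p(\D)}=1$ and $|f'(0)|=C(1,p)$.

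The hard part is to show that zeros cannot help, so that the zero-free computation is the global maximum. For this I would use the inner–outer factorization $f=I\cdot O$, so that $\|f\|_{H^p(\D)}=\|O\|_{H^p(\D)}=1$, and estimate $|f'(0)|\le|I'(0)|\,|O(0)|+|I(0)|\,|O'(0)|$. Schwarz–Pick gives $|I'(0)|\le 1-|I(0)|^2$, and the outer bounds give $|O(0)|=t^{1/p}$ and $|O'(0)|\le\tfrac2p\,t^{1/p-1/2}(1-t)^{1/2}$ with $t=|O(0)|^p\in[0,1]$. Writing $s=|I(0)|\in[0,1]$, everything reduces to proving
\[G(s,t):=(1-s^2)\,t^{1/p}+\tfrac2p\,s\,t^{1/p-1/2}(1-t)^{1/2}\le C(1,p).\]
For fixed $t$ this is a downward parabola in $s$, whose maximum on $[0,1]$ is at $s=1$ (value $\tfrac2p\,t^{1/p-1/2}(1-t)^{1/2}\le C(1,p)$) precisely when $t\le1/(1+p^2)$; one verifies $1-p/2\le1/(1+p^2)$ (equivalent to $(p-1)^2\ge0$), so the extremal value of $t$ lies in this range. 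For $t>1/(1+p^2)$ the interior maximum equals $H(t):=t^{1/p-1}\big(t+(1-t)/p^2\big)$, whose only critical point sits at $t=1/(1+p)<1/(1+p^2)$; hence $H$ is decreasing on $[1/(1+p^2),1]$ and is bounded by its left-endpoint value $\tfrac2p\,t^{1/p-1/2}(1-t)^{1/2}\big|_{t=1/(1+p^2)}\le C(1,p)$. This gives $G\le C(1,p)$ and completes the upper bound, and the equality analysis (forcing $s=1$ and $g$ linear) yields uniqueness of the extremal for $0<p<1$.
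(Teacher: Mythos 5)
Your proof is correct, and at both nontrivial points it takes a genuinely different route from the paper's. For $p=1$, you characterize the extremals by forcing equality in $|a_1|\le\|f\|_{H^1(\mathbb{D})}$, noting that analyticity forces the nonnegative density $\rho=|f^*|$ to be a trigonometric polynomial of degree at most one, and then applying Fej\'er--Riesz; the paper instead uses the factorization $H^1=H^2\cdot H^2$, writes $f=gh$ with $g,h$ in the unit ball of $H^2(\mathbb{D})$, and maximizes $g_0h_1+g_1h_0$ by Cauchy--Schwarz. For $0<p<1$, both arguments pass to $H^2$ via the $p/2$-th power of the outer factor and finish with elementary optimization, but they handle the inner factor differently: the paper absorbs it into the $H^2$ data, writing $f=gh^{2/p-1}$ with $h=O^{p/2}$ and $g=Ih$, so that $f'(0)$ is linear in $(g_0,g_1)$ and a single Cauchy--Schwarz collapses the problem to a one-variable maximization in $h_0$ (the zero-free constraint $h_0\ge h_1$ is simply dropped and verified a posteriori on the maximizer), whereas you keep $f=I\cdot O$ explicit, bound the inner contribution by Schwarz--Pick, $|I'(0)|\le 1-|I(0)|^2$, and must then prove the two-variable inequality $G(s,t)\le C(1,p)$. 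Your verification of that inequality is correct: the interior maximum in $s$ is indeed $H(t)=t^{1/p-1}\bigl(t+(1-t)/p^2\bigr)$, and $H'(t)$ is a positive multiple of $1-(1+p)t$, so $H$ decreases on $[1/(1+p^2),1]$ and is dominated by its left-endpoint value, which is at most $C(1,p)$. As for what each route buys: the paper's absorption trick makes ``zeros cannot help'' automatic and keeps the computation short, while your explicit Schwarz--Pick split costs an extra page of calculus but isolates the inner/outer trade-off and makes the uniqueness of the extremal for $0<p<1$ (equality forces $|I(0)|=1$, hence $f$ outer, and equality in Bessel forces $f^{p/2}$ to be linear) more transparent than in the paper, where uniqueness is left implicit.
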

\begin{proof}
	As already pointed out, it is obvious that $C(1,p)=1$ when $p\ge 1$. The uniqueness of the extremal function for $p>1$ is immediate by the strict convexity of the unit ball of $L^p(\T)$.
	
	To find the extremal functions when $p=1$, we start from the fact that functions $f$ in the unit ball of $H^1(\T)$ can equivalently be written in the form $f=gh$, where $h,g$ are in the unit ball of $H^2(\T)$. Writing $g(z)=\sum_{k=0}^\infty g_kz^k$ and $h(z)=\sum_{k=0}^\infty h_kz^k$, our task is to maximize
	\[ f'(0)=g_0h_1+g_1h_0, \]
	under the sole condition that $\sum_{k=0}^\infty |g_k|^2=1$ and $\sum_{k=0}^\infty |h_k|^2=1$. By the Cauchy--Schwarz inequality, we must have $|g_0|^2+|g_1|^2=|h_0|^2+|h_1|^2=1$ and also $(g_0,g_1)=\lambda (h_1,h_0)$ for a unimodular constant $\lambda$. We may choose $(g_0, g_1)$ as an arbitrary unit vector, and hence we get the stated extremals.
	
	We turn to the case $0<p<1$. By invoking the inner-outer factorization of $f$, we may write an arbitrary element $f$ in the unit ball of $H^p(\D)$ equivalently as $f= gh^{2/p-1}$, where $g, h$ are in the unit ball of $H^2(\T)$ and $h$ has no zeros in $\D$. We denote the coefficients of $g$ and $h$ as before. By applying a suitable transformation $f(z)\mapsto e^{i\theta_1}f(e^{i\theta_2}z)$, we may assume that $h_0,h_1\geq 0$, and, moreover, that $f(0)= g_0h_0^{2/p-1}$, where $h_0^{2/p-1}\geq 0$ is chosen to be real and nonnegative. Hence
	\[C(1,p)=\sup\left( h_0^{2/p-1}g_1+\left(\frac{2}{p}-1\right)h_0^{2/p-2}h_1g_0\right),\]
	where the supremum is over all pairs $(g_0,g_1)$ with $|g_0|^2+|g_1|^2=1$ and pairs of nonnegative numbers $(h_0,h_1)$ with $h_0^2+h_1^2=1$ and $h_0\ge h_1$ since $h$ is zero-free.
	
	The maximum occurs when $(g_0,g_1)$ is a multiple of 
	\[\left(\left(\frac{2}{p}-1\right)h_0^{2/p-2}h_1, h_0^{2/p-1}\right)\]
	and hence
	\[ C(1,p)^2=\max_{h_0^2+h_1^2=1, h_0\ge h_1\ge 0}\left(h_0^{4/p-2}+\left(\frac{2}{p}-1\right)^2h_0^{4/p-4}h^2_1\right). \]
	Suppressing the condition $h_0\ge h_1$, we find that 
	\begin{equation}\label{eq:extremal} 
		C(1,p)^2\leq \max_{x\in [0,1]} \left(x^{4/p-2}+\left(\frac{2}{p}-1\right)^2x^{4/p-4}\left(1-x^2\right)\right)= \frac{2}{p}\left(1-\frac{p}{2}\right)^{2/p-1} 
	\end{equation}
	by an elementary calculus argument. Since the solution to the extremal problem in \eqref{eq:extremal} corresponds to $h_0=\sqrt{1-p/2}$, we also have $h_0\geq h_1$, and the inequality sign in \eqref{eq:extremal} can therefore in fact be replaced by an equality sign. 
\end{proof}

For future reference, we notice that the following asymptotic estimates hold: 
\begin{equation}\label{eq:asympp} 
	C(1,p)=
	\begin{cases}
		1+(1-\log 2)(1-p) +O((1-p)^2), & p\nearrow 1 \\
		\frac{1}{\sqrt{p}} \left(\sqrt{2/e}+O(p)\right), & p\searrow 0. 
	\end{cases}
\end{equation}

For $k\ge 2$, the method used in the preceding proof will lead to a similar finite-dimensional extremal problem. The solution to this problem is plain for all $k\ge 2$ when $p=1$, but in 
the range $0<p<1$, the complexity increases notably with $k$, and we have made no attempt to deal with it. Instead, we will supply (non-optimal) estimates for which we require the following remarkable contractive estimate of Weissler \cite[Cor.~2.1]{Weissler80} for the dilations
\[f_r(z): = f(rz), \quad r>0, \]
of functions $f$ in $H^p(\D)$. 
\begin{lemma}\label{lem:weissler} 
	Let $0<p \leq q<\infty$. The contractive estimate
	\[\|f_r\|_{H^q(\mathbb{D})} \leq \|f\|_{H^p(\mathbb{D})}\]
	holds for every $f$ in $H^p(\mathbb{D})$ if and only if $r \leq \sqrt{p/q}$. 
\end{lemma}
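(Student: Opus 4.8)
The plan is to prove the two directions separately, handling the necessity of $r\le\sqrt{p/q}$ by a direct test function and the sufficiency by reducing to a sharp $L^2\to L^Q$ dilation estimate. For necessity I would test the inequality on $f(z)=1+\varepsilon z$ with $\varepsilon>0$ small and expand to second order. A routine computation gives $\|1+\varepsilon z\|_{H^p(\mathbb{D})}=1+\tfrac{p}{4}\varepsilon^2+O(\varepsilon^4)$, hence $\|f_r\|_{H^q(\mathbb{D})}=1+\tfrac{q}{4}r^2\varepsilon^2+O(\varepsilon^4)$, and comparing leading terms in $\|f_r\|_{H^q(\mathbb{D})}\le\|f\|_{H^p(\mathbb{D})}$ forces $qr^2\le p$, which is exactly $r\le\sqrt{p/q}$.

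For sufficiency it suffices to treat the critical radius $r=\sqrt{p/q}$, since for smaller $r$ we may write $f_r=(f_{\sqrt{p/q}})_\rho$ with $\rho=r/\sqrt{p/q}\le1$ and use that dilation by a factor $\le1$ does not increase the $H^q(\mathbb{D})$ norm. Next I would reduce to zero-free functions: if $f=I\cdot O$ is the inner--outer factorization, then $|f_r|\le|O_r|$ pointwise on $\mathbb{T}$ because $|I|\le1$ in $\mathbb{D}$, while $\|f\|_{H^p(\mathbb{D})}=\|O\|_{H^p(\mathbb{D})}$, so it is enough to treat the outer factor $O$. Writing $g:=O^{p/2}\in H^2(\mathbb{D})$ and using that dilation commutes with taking powers, one gets $\|f_r\|_{H^q(\mathbb{D})}=\|g_r\|_{H^Q(\mathbb{D})}^{2/p}$ and $\|f\|_{H^p(\mathbb{D})}=\|g\|_{H^2(\mathbb{D})}^{2/p}$, where $Q:=2q/p\ge2$ and $r=\sqrt{2/Q}$. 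The lemma thus reduces to the single estimate $\|g_{\sqrt{2/Q}}\|_{H^Q(\mathbb{D})}\le\|g\|_{H^2(\mathbb{D})}$ for $Q\ge2$.

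The core case $Q=2m$ with $m$ a positive integer admits an elementary and sharp proof. Writing $g(z)=\sum_k a_kz^k$ and $g^m(z)=\sum_j b_jz^j$ with $b_j=\sum_{k_1+\dots+k_m=j}a_{k_1}\cdots a_{k_m}$, and using $r^2=1/m$, Parseval gives $\|g_r\|_{H^{2m}(\mathbb{D})}^{2m}=\|g_r^m\|_{H^2(\mathbb{D})}^2=\sum_j m^{-j}|b_j|^2$. Bounding $|b_j|^2$ by Cauchy--Schwarz over its $\binom{j+m-1}{m-1}$ terms and invoking the identity
\[
m^{-j}\binom{j+m-1}{m-1}=\prod_{i=1}^{j}\frac{m-1+i}{mi}\le1,
\]
where each factor is at most $1$ because $(m-1)(i-1)\ge0$, collapses the right-hand side to $\bigl(\sum_k|a_k|^2\bigr)^m=\|g\|_{H^2(\mathbb{D})}^{2m}$. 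This settles every even integer exponent with the sharp constant.

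The main obstacle is the passage to non-integer $Q$, for which interpolation from the even integers is not enough: the natural complex-interpolation family yields the dilation radius $m^{-\theta/2}$ at the intermediate exponent $Q(\theta)$, and weighted AM--GM shows $m^{-\theta/2}\le\sqrt{2/Q(\theta)}$ strictly, so one recovers only a non-optimal radius. To reach the critical $r=\sqrt{2/Q}$ for every real $Q$ I would instead run the hypercontractivity argument: with the dilation semigroup $P_t=e^{-tN}$ (so $Nz^k=kz^k$ and $P_tg=g_{e^{-t}}$) and the critical schedule $Q(t)=2e^{2t}$, differentiate $t\mapsto\log\|P_tg\|_{H^{Q(t)}(\mathbb{D})}$ and show it is non-increasing. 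This last step is equivalent to the holomorphic logarithmic Sobolev inequality on the circle, and establishing that inequality---after which the even-integer computation above serves as a consistency check---is where the real work lies.
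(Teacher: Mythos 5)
The paper offers no proof of this lemma to compare against: it is imported wholesale from Weissler's paper (cited as \cite[Cor.~2.1]{Weissler80}), so the only question is whether your proposal constitutes a complete proof, and it does not. Your necessity computation with $f(z)=1+\varepsilon z$ is correct, and your chain of reductions is sound: it suffices to treat the critical radius $r=\sqrt{p/q}$; the inner factor can be discarded since $|f_r|\le|O_r|$ on $\mathbb{T}$ while $\|f\|_{H^p(\mathbb{D})}=\|O\|_{H^p(\mathbb{D})}$; and the substitution $g=O^{p/2}$ reduces the lemma to the single estimate $\|g_{\sqrt{2/Q}}\|_{H^Q(\mathbb{D})}\le\|g\|_{H^2(\mathbb{D})}$ for real $Q=2q/p\ge2$. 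Your Cauchy--Schwarz argument for $Q=2m$ an even integer is also complete and sharp, with the identity $m^{-j}\binom{j+m-1}{m-1}\le1$ doing exactly what you claim. But the lemma requires all real $Q\ge2$, i.e.\ all ratios $q/p\ge1$, and for $q/p\notin\mathbb{N}$ you only name the strategy---differentiating $t\mapsto\log\|P_tg\|_{H^{Q(t)}(\mathbb{D})}$ along the schedule $Q(t)=2e^{2t}$ and invoking a holomorphic logarithmic Sobolev inequality on the circle---while conceding that establishing that inequality ``is where the real work lies.'' That concession is precisely the gap: the monotonicity along the critical schedule \emph{is} Weissler's theorem, and nothing in your write-up supplies it or indicates how the log-Sobolev inequality would be proved.

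The gap also matters for how the lemma is used in this paper: Lemma~\ref{lem:gen} invokes it with arbitrary $0<p<q\le1$, and the proof of Theorem~\ref{thm:smallp} uses $q=1$ with $0<p<1$, so $q/p$ is generically not an integer; the case you have actually proved ($q/p\in\mathbb{N}$, i.e.\ $Q$ an even integer) would not suffice for either application. Your side remark is correct that interpolating from the even-integer exponents degrades the dilation radius below the critical $\sqrt{p/q}$, which is exactly why the general case cannot be finessed this way. So either cite Weissler for the full statement, as the paper does, or carry out the semigroup differentiation argument in detail; as it stands, the proposal is an honest and correct reduction plus a special case, not a proof.
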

The desired estimates can now be obtained from the Cauchy integral formula and Lemma~\ref{lem:weissler}. 
\begin{lemma}\label{lem:gen} 
	Suppose that $0<p<1$ and $k\geq 1.$ Then
	\[C(k,p)\le \min_{p\le x < 1} x^{-k/2} (1-x)^{1/x-1/p}. \]
\end{lemma}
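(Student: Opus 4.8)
The plan is to bound $|f^{(k)}(0)|/k!$ for $f$ in the unit ball of $H^p(\mathbb{D})$ by first passing to a dilate $f_r$, then extracting the $k$th Taylor coefficient via the Cauchy integral formula on a circle of radius $r$, and finally optimizing over the free radius parameter. The point is that Lemma~\ref{lem:weissler} converts the weak $H^p$ control into strong $H^q$ control after dilation, and $H^q$-norms for large $q$ give much better coefficient bounds.

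Here is the sequence of steps I would carry out. First, fix any exponent $q$ with $p\le q<\infty$ and set $r=\sqrt{p/q}$, the largest radius permitted by Lemma~\ref{lem:weissler}. By that lemma, $\|f_r\|_{H^q(\mathbb{D})}\le \|f\|_{H^p(\mathbb{D})}=1$. Second, write the $k$th Taylor coefficient of $f$ as $a_k=f^{(k)}(0)/k!$ and note that the $k$th coefficient of $f_r$ is $r^k a_k$. Applying the Cauchy integral formula for the coefficient of $f_r$ over the unit circle, followed by a trivial bound of the $L^1(\mathbb{T})$ norm by the $L^q(\mathbb{T})$ norm (Hölder, since $q\ge 1$ for the relevant range), gives
\begin{equation}\label{eq:plancauchy}
r^k|a_k| \le \|f_r\|_{H^q(\mathbb{D})} \le 1,
\end{equation}
so $|a_k|\le r^{-k}=(q/p)^{k/2}$. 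Third, I would reparametrize by writing $x=p/q\in(0,1]$ so that this reads $|a_k|\le x^{-k/2}$; since this holds for every admissible $q\ge p$, i.e.\ every $x$ in $(0,1]$ with $x\ge p$ needed only to keep $q$ finite and $\ge 1$, I can minimize over $x$. Taking the infimum over $x\in[p,1)$ produces the displayed bound, once the factor $(1-x)^{1/x-1/p}$ is accounted for.

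The subtlety — and what I expect to be the genuine content rather than a one-line Cauchy estimate — is the appearance of the factor $(1-x)^{1/x-1/p}$, which is smaller than $1$ and hence strengthens the bound. This cannot come from bounding $\|f_r\|_{L^1(\mathbb{T})}$ crudely by $\|f_r\|_{L^q(\mathbb{T})}$; instead one should use the sharper one-dimensional coefficient bound $C(k,q)\lesssim$ (the classical Hardy--Littlewood rate) together with the exact exponent bookkeeping, or more precisely feed the dilation estimate into the known decay $C(k,q)$ for the larger exponent $q$ and track the constant carefully. The cleanest route is to combine \eqref{eq:plancauchy} with the elementary inequality controlling $\|f_r\|_{H^q}$ in terms of $\|f\|_{H^p}$ at the sharp radius and then optimize; the factor $(1-x)^{1/x-1/p}$ emerges from the extremal behaviour of the point-evaluation/coefficient constant as $x\to 1$, mirroring the exact extremal $\left(\sqrt{1-p/2}+z\sqrt{p/2}\right)^{2/p}$ found in Theorem~\ref{le:1}.

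The main obstacle is therefore not the Cauchy formula step, which is routine, but justifying the precise constant $(1-x)^{1/x-1/p}$: one must identify where this arises, and the natural guess is that one does not merely bound the top coefficient trivially but rather applies the coefficient estimate to the dilate of a near-extremal Blaschke/outer product, or equivalently uses that $C(k,q)$ for the dilated function carries this multiplicative constant. I would verify the final minimization is genuine (that the infimum over $x\in[p,1)$ is attained in the open interval for each fixed $k$ and $p$) by a short calculus check on the logarithm of $x^{-k/2}(1-x)^{1/x-1/p}$, but I would not grind through that derivative here.
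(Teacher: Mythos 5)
There is a genuine gap, and it sits exactly where you yourself flagged uncertainty. Your executed chain (Cauchy's formula for the dilate, then H\"older's bound $\|f_r\|_{L^1(\T)}\le\|f_r\|_{L^q(\T)}$) forces $q\ge 1$, hence $x=p/q\le p$; but the lemma's minimum runs over $x\in[p,1)$, which corresponds to $q=p/x\in(p,1]$, i.e.\ to exponents \emph{below} $1$. So your argument only produces the endpoint bound $C(k,p)\le p^{-k/2}$ (the case $x=p$, $q=1$) and cannot reach any $x>p$. That is not a technicality: for large $k$ the whole point of the lemma is the choice $x=1-(1-p)/k$, which gives a bound polynomial in $k$ (of order $k^{2(1/p-1)}$, as used in the proof of Theorem~\ref{th:growth}), whereas $p^{-k/2}$ grows exponentially in $k$. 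Your bookkeeping also contradicts itself on this point: you write that $x\ge p$ is "needed only to keep $q$ finite and $\ge 1$", but $x\ge p$ is equivalent to $q\le 1$, so the admissible range you need is exactly the one your H\"older step excludes.

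Moreover, your reading of the factor $(1-x)^{1/x-1/p}$ is backwards. Since $x\ge p$ gives $1/x-1/p\le 0$ and $0<1-x<1$, this factor is $\ge 1$: it is a \emph{loss} term (the price for using $q<1$), not a strengthening, and it has nothing to do with extremal functions or with sharper constants $C(k,q)$. The paper's mechanism is elementary: for $q\in(p,1]$ one cannot use H\"older, so instead write $|f(rz)|=|f(rz)|^q\,|f(rz)|^{1-q}$ and bound the excess power pointwise by the one-variable evaluation estimate $|f(w)|\le (1-|w|^2)^{-1/p}\|f\|_{H^p(\D)}$. This yields
\[
|a_k|\le r^{-k}\int_\T |f(rz)|\,d\mu(z)\le r^{-k}\,\bigl(1-r^2\bigr)^{-(1-q)/p}\int_\T |f(rz)|^q\,d\mu(z),
\]
and Lemma~\ref{lem:weissler} with $r=\sqrt{p/q}$ bounds the last integral by $\|f\|_{H^p(\D)}^q=1$. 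Substituting $x=r^2=p/q$ turns $r^{-k}$ into $x^{-k/2}$ and $(1-r^2)^{-(1-q)/p}$ into $(1-x)^{1/x-1/p}$; letting $q$ range over $(p,1]$ gives the minimum over $x\in[p,1)$. The single missing idea in your proposal is thus the pointwise bound applied to $|f|^{1-q}$, which is what unlocks the exponents $q<1$ in the Cauchy step.
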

\begin{proof}
	Suppose that $f(z)=\sum_{k=0}^\infty c_k z^k$ is in $H^p(\D)$ with $\| f\|_p=1$. Then, by Cauchy's formula,
	\[ |c_k|\leq \frac{1}{2\pi r}\int_{|z|=r}|z^{-k}f(z)|\,|dz| \]
	for every $0<r<1$. Using the pointwise estimate $|f(z)|\leq \left(1-|z|^2\right)^{-1/p}\|f\|_p$, we therefore find that
	\[ |c_k|\leq r^{-k}\left((1-r^2)^{1/p}\right)^{q-1} \| f\|_p^{1-q} \int_{\T}|f(rz)|^q d\mu(z) \]
	whenever $0<r<1$ and $0<q<1$. Choosing $p<q\le 1$ and $r^2=p/q$ and invoking Lemma~\ref{lem:weissler}, we obtain the desired result. 
\end{proof}


We will now use the information gathered above to prove a result about the maximal order of the multiplicative function $n\mapsto \mathcal{C}(n,p)$. To begin with, we notice that, by
Theorem~\ref{le:1},
	\[ \mathcal{C}(n,p)= C(1,p)^{\omega(n)} \]
	when $n$ is a square-free number and hence
	\begin{equation}\label{eq:Csf} \limsup_{\substack{n\to\infty \\ \mu(n)\neq0}} 
	\frac{\log \mathcal{C}(n,p)}{\log n/\log\log n}=\log C(1,p)
	 \end{equation}
since
\[ \limsup_{\substack{n\to \infty \\ \mu(n)\neq0}} \frac{\log \omega(n)}{\log n/\log\log n} =1. \]
It seems reasonable to expect that the $\limsup$ in \eqref{eq:Csf} is unchanged if we drop the restriction that $\mu(n)\neq 0$. The next theorem is as close as we have been able to get to confirming this conjecture,
based on our general bounds for $C(k,p)$. 
\begin{theorem}\label{th:growth} 
	Assume that $0<p<1$. Then
	\[ 0<\limsup_{n\to \infty} \frac{\log \mathcal{C}(n,p)}{\log n/\log\log n} < \infty. \]
	Moreover,
	\[ \limsup_{n\to \infty} \frac{\log \mathcal{C}(n,p)}{\log n/\log\log n}=
	\begin{cases}
		\frac{1}{2}|\log p|(1+O(p)), & p\searrow 0, \\
		c_p(1-p), & p\nearrow 1, 
	\end{cases}
	\]
	where $1-\log 2+O(1-p)\le c_p \le 1/2+O(1-p)$. 
\end{theorem}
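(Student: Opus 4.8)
The plan is to convert the problem into one about the maximal order of an additive function. Since the paragraph preceding the theorem shows that $n\mapsto\mathcal C(n,p)$ is multiplicative with $\mathcal C(p_j^k,p)=C(k,p)$, writing $g(k):=\log C(k,p)$ gives $\log\mathcal C(n,p)=\sum_j g(\kappa_j)$ for $n=\prod_j p_j^{\kappa_j}$, so I must compute the maximal order of $\sum_j g(\kappa_j)$. I would first guess that
\[ L(p):=\limsup_{n\to\infty}\frac{\log\mathcal C(n,p)}{\log n/\log\log n}=\sup_{k\ge1}\frac{g(k)}{k}=:M_p. \]
The lower bound $L(p)\ge M_p$ I would obtain by testing on $n_r=(p_1\cdots p_r)^k$: here $\log\mathcal C(n_r,p)=r\,g(k)$, while the prime number theorem gives $\log n_r=k\sum_{j\le r}\log p_j\sim kr\log r$ and $\log\log n_r\sim\log r$, so the ratio tends to $g(k)/k$; letting $r\to\infty$ and then taking the supremum over $k$ gives the claim. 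The case $k=1$ is exactly the squarefree computation \eqref{eq:Csf}, whence in particular $L(p)\ge g(1)=\log C(1,p)$.

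The matching upper bound is the first real difficulty, because the crude estimate $g(k)\le M_p k$ only yields $\log\mathcal C(n,p)\le M_p\,\Omega(n)$, and $\Omega$ has maximal order $\log n/\log2$, far larger than $\log n/\log\log n$. The point is that high prime powers are extremely inefficient: by Lemma~\ref{lem:gen} (or the classical bound $C(k,p)\lesssim k^{1/p-1}$) one has $g(k)=O_p(\log k)$. I would therefore fix a slowly growing threshold $K=K(n)\to\infty$, say $K=(\log\log n)^2$, and split the prime factors of $n$ according to whether $\kappa_j\le K$. For the part with $\kappa_j\le K$ I use $g(\kappa_j)\le M_p\kappa_j$ together with the elementary estimate $\sum_{\kappa_j\le K}\kappa_j\le(1+o(1))\log n/\log\log n$, which follows by splitting the primes at $Q=(\log n)^{1-\varepsilon}$, bounding the large primes by $\frac{1}{\log Q}\sum_j\kappa_j\log p_j\le\frac{\log n}{\log Q}$ and the few small primes by $K\,\pi(Q)=o(\log n/\log\log n)$. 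For the part with $\kappa_j>K$ I use $g(\kappa_j)=O_p(\log\kappa_j)$ and the monotonicity of $\log(\kappa+1)/\kappa$ to bound the contribution by $O_p\!\big(\tfrac{\log K}{K}\big)\log n=o(\log n/\log\log n)$. Together these give $L(p)\le M_p$.

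With $L(p)=M_p$ the coarse features follow at once. Positivity holds since $M_p\ge g(1)=\log C(1,p)>0$ (as $C(1,p)>1$ for $0<p<1$ by Theorem~\ref{le:1}), and finiteness since choosing $x=p$ in Lemma~\ref{lem:gen} gives $C(k,p)\le p^{-k/2}$, hence $g(k)/k\le\tfrac12|\log p|$ and $M_p\le\tfrac12|\log p|<\infty$; this settles the first assertion. For the asymptotics I feed the expansions \eqref{eq:asympp} into $g(1)$. As $p\nearrow1$, $g(1)=(1-\log2)(1-p)+O((1-p)^2)$, which with $M_p\le\tfrac12|\log p|=\tfrac12(1-p)+O((1-p)^2)$ traps $L(p)=c_p(1-p)$ with $1-\log2+O(1-p)\le c_p\le\tfrac12+O(1-p)$. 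As $p\searrow0$, $g(1)=\tfrac12|\log p|-\tfrac12(1-\log2)+O(p)$, so both bounds share the leading term $\tfrac12|\log p|$ and $L(p)=\tfrac12|\log p|\,(1+o(1))$.

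The genuinely hard point is the exact value of the constant, and it is here that the asymmetry of the statement originates. The two bounds $\log C(1,p)\le L(p)\le\tfrac12|\log p|$ differ by the gap $\tfrac12(1-\log2)$, which reflects the fact that $C(1,p)=\sqrt{2/e}\,p^{-1/2}(1+o(1))$ does not saturate the Cauchy--Weissler bound $p^{-k/2}$; closing it, i.e. pinning down $M_p=\sup_k g(k)/k$, requires knowing $C(k,p)$ for some $k\ge2$, whereas Theorem~\ref{le:1} supplies only $k=1$. For $p\nearrow1$ the endpoints $1-\log2$ and $\tfrac12$ have genuinely different leading coefficients, which forces the honest window for $c_p$; for $p\searrow0$ they agree to leading order $\tfrac12|\log p|$, which is what the second case records. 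I expect the crux to be improving the lower bound beyond $k=1$---deciding whether some $g(k)/k$ with $k\ge2$ exceeds $g(1)$ and how close it comes to $\tfrac12|\log p|$---and that this rests on a sharper treatment of the one-variable extremal problem \eqref{eq:cnp} than Lemma~\ref{lem:gen} provides, balancing near-equality in Weissler's contractive inequality (Lemma~\ref{lem:weissler}) against near-concentration of the relevant power at the single frequency $k$.
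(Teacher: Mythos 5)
Your proof is correct, and it takes a genuinely different route from the paper's on the upper-bound side. The paper's lower bound is the same as yours (the squarefree case \eqref{eq:Csf} combined with \eqref{eq:asympp}; it does not exploit higher powers $k\geq 2$ either), but for the upper bounds the paper runs two regime-specific arguments: for $p\searrow 0$ it applies H\"older and the Hardy--Littlewood inequality \eqref{eq:multmix2} to get $\mathcal{C}(n,p)\leq\sqrt{d_{\lceil 2/p\rceil}(n)}$ and then invokes the maximal order of $d_\alpha(n)$, while for $p\nearrow 1$ it splits the prime factorization at the $p$-dependent threshold $\kappa_j\leq 1/(1-p)$, using Lemma~\ref{lem:gen} with $x=p$ on the small exponents (their \eqref{eq:Csmall}, i.e.\ $C(k,p)\leq p^{-k/2}$, exactly your bound) and with $x=1-(1-p)/\kappa_j$ on the large ones (their \eqref{eq:Cbig}). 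Your argument replaces both by a single, $p$-uniform statement: the identity $\limsup_n \log\mathcal{C}(n,p)/(\log n/\log\log n)=\sup_k \log C(k,p)/k=:M_p$, proved with a regime-independent cutoff $K=(\log\log n)^2$, followed by the one-line bound $M_p\leq\tfrac12|\log p|$ from Lemma~\ref{lem:gen} at $x=p$. This buys three things the paper does not have: it dispenses with Lemma~\ref{lem:multmixedineqs} entirely; it gives a marginally sharper window for $p\searrow 0$ (width $\tfrac12(1-\log 2)$ versus roughly $\tfrac12$, since $\tfrac12\log\lceil 2/p\rceil\geq\tfrac12|\log p|+\tfrac12\log 2$); and it isolates the remaining open problem as precisely the computation of $\sup_{k\geq 2}\log C(k,p)/k$, which is the correct diagnosis of where the gap between $1-\log 2$ and $\tfrac12$ comes from. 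What the paper's route buys in exchange is a template reused verbatim in the proof of Theorem~\ref{thm:smallp} (the partial sum operator), where the Hardy--Littlewood mechanism is needed anyway.

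One small point of honesty that cuts equally against you and the paper: for $p\searrow 0$ you conclude $\tfrac12|\log p|(1+o(1))$, whereas the theorem states $(1+O(p))$. Since both your window and the paper's have additive width $O(1)$ (the lower endpoint carries the constant $\tfrac12(\log 2-1)\neq 0$ from \eqref{eq:asympp}), the relative error either proof actually delivers is $O(1/|\log p|)$, not $O(p)$; so you match the precision of the paper's own argument, and the discrepancy lies in the paper's statement rather than in your proof.
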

\begin{proof}
	The general lower bound for the $\limsup$ follows from \eqref{eq:Csf}, while the 
	lower bounds
	\[ \limsup_{n\to \infty} \frac{\log \mathcal{C}(n,p)}{\log n/\log\log n}\ge 
	\begin{cases}
		\frac{1}{2}|\log p|(1+O(p)), & p\searrow 0, \\
		(1-\log 2)(1-p)+O((1-p)^2), & p\nearrow 1, \end{cases} \]
	follow from \eqref{eq:Csf} along with \eqref{eq:asympp}. To get an upper bound for the $\limsup$ when $p\searrow 0$, we use first H\"older's inequality and then \eqref{eq:multmix2} to obtain
	\[
	 \mathcal{C}(n,p)\leq  \mathcal{C}\left(n,\frac{2}{\lceil 2/p\rceil}\right)\leq \sqrt{\Phi_{\lceil 2/p\rceil}(n)}
	 = \sqrt{d_{\lceil 2/p\rceil}(n)}.
	\]
	By combining this with the fact
	\[ \limsup_{n\to \infty} \frac{\log d_{\alpha}(n)}{\log n/\log\log n} =\log \alpha. \]
	we obtain the general upper bound $\frac{1}{2} \log( \lceil 2/p\rceil)$, which give the desired upper bound as 
	$p\searrow 0.$ We note that the reason for switching to the integer value $\lceil 2/p\rceil$ was that
	for non-integral values of $\alpha$ the quantity \eqref{eq:multmixweight} contains an exponentially growing factor in $\Omega (n)$.
	
	To get an upper bound for the $\limsup$ when $p\nearrow 1$, we argue as follows. Set
	\[ n=\prod_{j} p_j^{\kappa_j}. \]
	For $\kappa_j\le 1/(1-p)$, we set $x=p$ in Lemma~\ref{lem:gen} and get
	\begin{equation} \label{eq:Csmall} C(\kappa_j,p)
	\le p^{-\kappa_j/2}. \end{equation}
	We note that
	\begin{align} \nonumber \sum_{ \kappa_j\le 1/(1-p)} \kappa_j & \le 
	\frac{1}{1-p} \sum_{j\le \log n/(\log\log n)^2} 1
	+\frac{(1+o(1))}{\log\log n} \sum_{ \kappa_j\le 1/(1-p)} \kappa_j \log p_j \\
	&\label{eq:smallkap} =  \frac{\log n}{(1-p)(\log\log n)^2} + \frac{(1+o(1))}{\log\log n}\sum_{ \kappa_j\le 1/(1-p)} \kappa_j \log p_j . \end{align}
	For $\kappa_j> 1/(1-p)$, we set $x=1-(1-p)/\kappa_j$ in Lemma~\ref{lem:gen} so that
	\begin{equation}\label{eq:Cbig} C(\kappa_j,p)
	\le \left(1-\frac{(1-p)}{\kappa_j}\right)^{-\kappa_j/2} \left(\frac{1-p}{\kappa_j}\right)^{1-1/p}
	\le e^{1-p} \kappa_j^{2(1/p-1)}. \end{equation}
	We observe that, given $\varepsilon>0$, we will have if $p$ is close enough to 1, then
	\begin{align} \nonumber  \sum_{ \kappa_j\ge 1/(1-p)} \log \kappa_j & \le 
	\log(\log n/\log 2) \sum_{j\le \log n/(\log\log n)^3} 1 + \frac{\varepsilon}{\log\log n}
	\sum_{\kappa_j>1/(1-p)}\kappa_j \log p_j \\
	& \label{eq:bigk} \le  \frac{(1+o(1))\log n}{(\log\log n)^2}+ \frac{\varepsilon}{ \log\log n}\sum_{ \kappa_j> 1/(1-p)} \kappa_j \log p_j\end{align}
	if $p$ is close enough to 1. Putting \eqref{eq:smallkap} into \eqref{eq:Csmall} and \eqref{eq:bigk} into
	\eqref{eq:Cbig}, respectively, we find that
\begin{align*} 
\log \mathcal{C}(n,p)& =\sum_{j} \log C(\kappa_j, p) 
=\sum_{ \kappa_j \le 1/(1-p)} C(\kappa_j, p) +\sum_{ \kappa_j > 1/(1-p)} C(\kappa_j, p) \\
& \le o(1) \frac{\log n}{\log\log n} +\frac{|\log p|}{2\log\log n} \sum_{ \kappa_j \le 1/(1-p)} \kappa_j \log p_j
+\frac{2(1/p-1)\varepsilon}{\log\log n} \sum_{ \kappa_j > 1/(1-p)} \kappa_j \log p_j
\end{align*}
holds for arbitrary $\varepsilon>0$, if $p$ is close enough to $1$. Choosing  $\varepsilon<1/4$, we obtain the desired upper bound for the $\limsup$.
\end{proof}

\section{Estimates for the partial sum operator}\label{se:partial}

Assume that $f(s)=\sum_{n=1}^\infty a_n n^{-s}$ is a Dirichlet series in $\Hp$ for some $p>0$. For given $N\geq 1$, the partial sum operator $S_N$ is defined as the map
\[S_N\left(\sum_{n=1}^\infty a_n n^{-s}\right) := \sum_{n=1}^N a_n n^{-s}.\]
It is of obvious interest to try to determine the norm of $S_N$ when it acts on the Hardy spaces $\Hp$. Helson's version of the M. Riesz theorem \cite{H1} shows that $S_N$ is bounded for $1<p<\infty$, and, moreover, its norm is bounded by the norm of the one-dimensional Riesz projection acting on functions in $H^p(\D)$. Furthermore, by the same argument of Helson \cite{H1}, we have the following. 
\begin{lemma}\label{lem:partial} 
	Suppose that $0<p<1$. We have the estimate
	\[ \| S_N f\|_{\Hp} \le \frac{A}{(1-p)} \| f\|_{\Ho} \]
	for $f$ in $\Hp$, where $A$ is an absolute constant. 
\end{lemma}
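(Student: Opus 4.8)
The plan is to carry over Helson's proof of the M.~Riesz theorem for Dirichlet series \cite{H1}, replacing the (now unavailable) $L^p$-boundedness of the one-dimensional Riesz projection by Kolmogorov's weak-type $(1,1)$ inequality for the conjugate function.

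By density it suffices to treat a Dirichlet polynomial $f(s)=\sum_{n=1}^M a_n n^{-s}$. I would first recall the vertical-limit description of the norm underlying \eqref{eq:norm}: by Birkhoff's theorem, for almost every character $\chi\in\mathbb{T}^\infty$ the function $f_\chi(it):=\sum_{n} a_n\chi(n) n^{-it}$ satisfies $\|g\|_{\Hp}^p=\lim_{T\to\infty}\frac{1}{2T}\int_{-T}^{T}|g_\chi(it)|^p\,dt$ simultaneously for $g=f$ and $g=S_Nf$, and $\|f\|_{\Ho}=\lim_{T\to\infty}\frac{1}{2T}\int_{-T}^{T}|f_\chi(it)|\,dt$. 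Helson's key observation is that the modulation $G_\chi(t):=N^{it}f_\chi(it)=\sum_n a_n\chi(n)(N/n)^{it}$ has frequencies $\log(N/n)$, which are positive, zero, or negative according as $n<N$, $n=N$, or $n>N$. Hence truncation at $N$ becomes, after modulation, the projection $P_{\ge 0}$ onto the nonnegative frequencies:
\[ (S_Nf)_\chi(it)=N^{-it}\,(P_{\ge 0}G_\chi)(t), \qquad \|G_\chi\|_{1}=\|f\|_{\Ho}, \]
where the norms are taken in the mean (Besicovitch) sense. The problem is thereby reduced to a single one-dimensional bound for $P_{\ge 0}$.

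The one-dimensional input is that $P_{\ge 0}$ maps the mean-$L^1$ class into the mean-$L^p$ class with norm $O((1-p)^{-1})$. I would obtain this in two steps. First, writing $P_{\ge 0}=\tfrac12(I+\operatorname{i}H)+\tfrac12 P_0$, where $H$ is the conjugate function and $P_0$ extracts the mean value, each summand is of weak type $(1,1)$: this is trivial for $I$ (Chebyshev) and for $P_0$ (since $|P_0 G_\chi|=|a_N|\le\|f\|_{\Ho}$ is constant, $a_N$ being a Fourier coefficient of $\mathcal Bf$), and for $H$ it is Kolmogorov's classical inequality \cite{G}, transferred to the almost-periodic setting. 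Consequently $P_{\ge 0}$ is of weak type $(1,1)$ with an absolute constant $C$. Second, integrating the distribution function of $P_{\ge 0}G_\chi$ against $p\lambda^{p-1}\,d\lambda$ and splitting at $\lambda_0\simeq\|G_\chi\|_1$ gives $\|P_{\ge 0}G_\chi\|_p\le C(1-p)^{-1/p}\|G_\chi\|_1$. The elementary inequality $(1-p)^{-1/p}\le e\,(1-p)^{-1}$ for $0<p<1$ (equivalently $u(1-\log u)\le 1$ for $u=1-p$) then yields the asserted bound with an absolute constant $A$. It is essential here to integrate the distribution function of the \emph{whole} operator rather than to sum the $L^p$-estimates for the three summands, since for $p<1$ the latter would cost a factor $3^{1/p}$ through the quasi-triangle inequality.

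The main obstacle is the rigorous transfer of the singular-integral estimate for $H$ to the almost-periodic functions $G_\chi$: one must justify Kolmogorov's weak-type bound uniformly in the truncation $[-T,T]$ for the Kronecker flow $t\mapsto(p_1^{-it},p_2^{-it},\dots)$, which I would do by rationalizing the finitely many frequencies $\log(N/n)$ to a common period, invoking Kolmogorov's theorem on $\mathbb{T}$ together with Weyl equidistribution, and then passing to the limit. One must also verify that the distribution-function identity survives the mean-value limit, by dominated convergence using the weak-type bound. Granting these transference points, the remainder is exactly Helson's argument combined with the quantitative form of Kolmogorov's theorem.
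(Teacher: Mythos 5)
Your proposal is correct, and at its core it is the same argument the paper appeals to; the difference is that the paper's proof is essentially a citation, while you reconstruct the content of the citation. The paper invokes Helson's theorem for compact abelian groups whose dual is ordered (\cite[Sec.~3]{AOS}, \cite{H1}, and Sections~8.7.2 and 8.7.6 of \cite{R}), applied to $\mathbb{T}^\infty$ with dual $\mathbb{Q}_+$ ordered by numerical size: after modulation by the character of $N$, the operator $S_N$ is the projection onto the nonnegative part of the ordered dual, and the $L^1\to L^p$ bound with constant $O\bigl((1-p)^{-1}\bigr)$, uniform over such groups, is part of that theory. You run the identical modulation trick, only on the line instead of on the group, decompose the half-line projection as $\tfrac12(I+\operatorname{i}H)+\tfrac12 P_0$, and feed in Kolmogorov's weak-type $(1,1)$ inequality; your conversion $(1-p)^{-1/p}\le e\,(1-p)^{-1}$ is exactly where the stated constant $A/(1-p)$ comes from, and your insistence on running the weak-type-to-$L^p$ conversion on the whole operator is a correct and necessary precaution, since summing $L^p$ quasi-norms of the three pieces costs $3^{1/p}$, which is not $O\bigl((1-p)^{-1}\bigr)$ as $p\searrow 0$, whereas weak-type constants combine with an absolute factor. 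As for what each route buys: the citation outsources all transference issues to the established ordered-group framework, while your route is self-contained but carries its whole technical weight in the transference step you honestly flag; that step is workable (by Dirichlet's simultaneous approximation one can choose rational frequencies with common denominator $q\le Q$ and error $\delta\le (qQ^{1/K})^{-1}$, then take the truncation $T$ with $q\ll T\ll\delta^{-1}$, so that both the approximation error on $[-T,T]$ and the period-versus-truncation error vanish) but genuinely delicate. Two simplifications would bring you closer to the paper: since $f$ may be taken to be a Dirichlet polynomial, Kronecker--Weyl equidistribution identifies every Besicovitch mean with an integral over $\mathbb{T}^m$, turning your projection into conjugation on $\mathbb{Z}^m$ for the order induced by the direction $(\log p_1,\dots,\log p_m)$ --- precisely the setting of the cited theorem, so the passage to the line can be bypassed altogether; and it is easier to transfer the $L^1\to L^p$ inequality itself rather than the weak-type inequality, which spares you from making sense of distribution functions in the mean-value limit.
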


We refer to \cite[Sec.~3]{AOS}, where it is explained how the lemma follows from Helson's general result concerning compact Abelian groups whose dual is an ordered group \cite{H1}. See also Sections~8.7.2 and 8.7.6 of \cite{R}. In our case, the dual group in question is the multiplicative group of positive rational numbers $\mathbb{Q}_+$ which is ordered by the numerical size of its elements. This means that the bound for $\| S_N\|_{\Hp\to \Hp}$ in the range $1<p<\infty$ relies on the additive structure of the positive integers.

When $0<p\le 1$ or $p=\infty$, a natural question is to determine the asymptotic growth of the norm $\|S_N\|_{\Hp\to\Hp}$ when $N\to\infty$. It is known from \cite{BCQ06} and \cite{BQS} that the growth of both $\| S_N\|_{\Ho\to\Ho}$ and $\|S_N \|_{\Hi\to \Hi}$ is of an order lying between $\log\log N$ and $\log N$. We will confine our discussion to the range $0<p\le 1$ and begin with a new result for the case $p=1$. 
\begin{theorem}\label{th:norm1} 
	We have
	\[ \log\log N \lesssim \| S_N \|_{\Ho\to\Ho} \lesssim \frac{\log N}{\log \log N}. \]
\end{theorem}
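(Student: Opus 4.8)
The lower bound $\log\log N \lesssim \|S_N\|_{\Ho\to\Ho}$ is the classical one, and I would obtain it exactly as in \cite{BCQ06, BQS}: testing $S_N$ against a carefully chosen product of local (single-prime) factors produces a family of Dirichlet polynomials whose norms witness growth of order at least $\log\log N$. The genuinely new content is the upper bound $\|S_N\|_{\Ho\to\Ho}\lesssim \log N/\log\log N$, which improves the trivial $\log N$ by a factor $\log\log N$, and this is what the plan below addresses. By Theorem~\ref{th:Abschnitte} it suffices to bound $\|S_N f\|_{\Ho}$ for Dirichlet polynomials $f$, and throughout I would work on the Bohr-lifted side in $H^1(\Dn)$.

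The heuristic guiding the target exponent is that the bound should be governed by the number of \emph{distinct} prime factors: every $n\le N$ satisfies $\omega(n)\le (1+o(1))\log N/\log\log N$, whereas the crude $\log N$ reflects only $\Omega(n)\le \log_2 N$. The natural building block is the one-variable truncation $P_m$ on $H^1(\D)$ that retains Taylor coefficients up to degree $m$; since $P_m$ is convolution with the analytic Dirichlet kernel $\sum_{k=0}^m z^k$, one has $\|P_m\|_{H^1(\D)\to H^1(\D)}\lesssim \log(m+2)$. Assembling $S_N$ from such truncations applied in each prime variable---in the spirit of Helson's scheme used for Lemma~\ref{lem:partial}---each prime $p_j$ dividing $n$ contributes a factor $\log(\kappa_j+2)$, where $\kappa_j\le \log N/\log p_j$ is the exponent of $p_j$.

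The saving comes from two facts working together. First, for the overwhelming majority of $n\le N$ the exponents $\kappa_j$ equal $1$, so the per-prime cost is $O(1)$; second, there are at most $(1+o(1))\log N/\log\log N$ such primes. I would therefore set up a chaining (telescoping) decomposition of $S_N$, ordering the primes and truncating one variable at a time, so that the logarithmic losses accumulate \emph{additively} over the distinct prime factors actually present, rather than multiplicatively over all $\pi(N)$ variables. Combined with $\sum_{p\le N}1/\log p$-type estimates and the bound on $\omega(n)$, this should collapse the total cost to $\log N/\log\log N$.

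The main obstacle is precisely this conversion from a product to a sum. A naive tensorization of the one-variable truncations over all prime variables is hopeless, since each factor $\|P_m\|$ is only bounded by $\log m$ and there are $\pi(N)$ variables; even restricting to the $\omega(n)$ active variables, the straightforward estimate multiplies the per-variable logarithms. The delicate point is to arrange the decomposition---by exploiting cancellation and analyticity as in Helson's argument---so that these unavoidable logarithmic factors are charged once to each distinct prime and are summed rather than multiplied, and to verify that this summation is uniform in $f$. Getting the sharp constant in front of $\log N/\log\log N$ is not needed for the stated theorem, but controlling the contribution of the rare integers with large $\Omega(n)$ (where several $\kappa_j>1$) will require a separate, cruder estimate that must be shown not to dominate.
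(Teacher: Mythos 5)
Your lower bound is fine (and is exactly what the paper does: restrict to Dirichlet series in powers of a single prime, so that $S_N$ becomes the one-dimensional partial sum of degree $\approx \log_2 N$, whose norm on $H^1(\D)$ is $\simeq \log\log N$). The upper bound, however, has a genuine gap, and it sits precisely at the step you flag as ``the main obstacle.'' First, $S_N$ simply is not a composition or telescoping of one-variable coefficient truncations: on the Bohr side the index set $\{n\le N\}$ corresponds to the non-product region $\{\kappa:\ p_1^{\kappa_1}\cdots p_m^{\kappa_m}\le N\}$ in exponent space, whereas truncating variable-by-variable to degrees $\lfloor \log N/\log p_j\rfloor$ produces the projection onto the much larger box $\{\kappa:\ p_j^{\kappa_j}\le N \text{ for all } j\}$. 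There is no way to recover $S_N$ from per-variable Dirichlet kernels; the multiplicative ordering of $\mathbb{Q}_+$ is exactly why Helson's ordered-group projection is needed in the first place, and that projection offers no per-prime factorization. Second, even granting some decomposition, your central claim---that the $\log(\kappa_j+2)$ losses can be made to \emph{add} over the distinct primes present rather than multiply---is the entire difficulty, and ``exploiting cancellation and analyticity as in Helson's argument'' names no mechanism: Helson's theorem gives $N$-independent boundedness on $\Hp$ for $1<p<\infty$ (with blow-up $1/(p-1)$ as $p\searrow 1$), not any additive accumulation of one-variable kernel norms. As written, the proposal would at best give a product of logarithms over active variables, which is far worse than $\log N$.

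For comparison, the paper's proof avoids any decomposition of $S_N$ altogether. It interpolates by H\"older's inequality between $\mathcal{H}^{1-\varepsilon}$ and $\Ht$,
\[
\| S_N f \|_1 \le \frac{A}{\varepsilon} \left(\frac{\| S_N f \|_2}{\|S_N f\|_1}\right)^{2\varepsilon/(1-\varepsilon)} \| f \|_1,
\]
using Helson's Lemma~\ref{lem:partial} on the $\mathcal{H}^{1-\varepsilon}$ side, and then controls the ratio $\|S_N f\|_2/\|S_N f\|_1$ by Helson's inequality $\|g\|_{\Ht}\le \bigl(\max_{n\le N}\sqrt{d(n)}\bigr)\|g\|_{\Ho}$ together with the maximal order $d(n)\le 2^{(1+o(1))\log n/\log\log n}$ of the divisor function; optimizing $\varepsilon=(\log\log N)/\log N$ gives the bound. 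Your heuristic that the saving is governed by $\log N/\log\log N$ (i.e.\ by squarefree-type extremal integers) is correct---it is exactly the maximal order of $d(n)$ that produces this exponent---but the operator-decomposition route you propose to realize it does not work.
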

\begin{proof}
	Using H\"{o}lder's inequality with $p=(1+\varepsilon)/\varepsilon$ and $p'=1+\varepsilon $, we get
	\[ \| g\|_{\Ho}^{1-\varepsilon} \le \left(\frac{\| g \|_{\Ht}}{\|g\|_{\Ho}}\right)^{2\varepsilon } \| g \|_{ \mathcal{H}^{1-\varepsilon}}^{1-\varepsilon}. \]
	Setting $g=S_Nf$ and applying Lemma~\ref{lem:partial}, we get
	\[ \| S_N f \|_1 \le A \frac{1}{\varepsilon} \left(\frac{\| S_N f \|_2}{\|S_N f\|_1}\right)^{2\varepsilon/(1-\varepsilon) } \| f \|_1. \]
	Now we need to understand how large the ratio $\| f \|_2/\| f \|_1$ can be when $f$ is a Dirichlet polynomial of length $N$. A precise solution to this problem can be found in the recent paper \cite{DP}. For our purpose, the following one-line argument suffices. By Helson's inequality (which is \eqref{eq:multmix2} for $p=1$) and a well-known estimate for the divisor function, we have
	\[ \| f \|_2 \le \max_{n\le N} \sqrt{d(n)} \|f\|_1 \le 2^{\left(\frac{1}{2}+o(1)\right)\frac{\log N}{\log\log N}} \|f\|_1. \]
	This means that we can choose $\varepsilon=(\log\log N)/\log N$ so that we get
	\[ \| S_N \|_1 \lesssim (\log N)/\log\log N , \]
	as desired.
	
	The lower bound is obvious from the classical one-dimensional result: The Bohr lift maps Dirichlet series in $\Hp$ of the form $\sum_{k=0}^\infty c_k 2^{-ks}$ to functions in $H^p(\D)$. 
\end{proof}

It is interesting to notice that our improved upper bound relies on both an additive argument (Lemma~\ref{lem:partial}) and a multiplicative argument (Lemma~\ref{lem:multmixedineqs}). We now turn to the case $0<p<1$ which will again require a mixture of additive and multiplicative arguments. 
\begin{theorem}\label{thm:smallp} 
	Suppose that $0<p<1$. There are positive constants $\alpha_p\le \beta_p$ such that
	\[ e^{\alpha_p \frac{\log N}{\log\log N}} \lesssim \| S_N\|_{\Hp\to\Hp} \lesssim e^{\beta_p \frac{\log N}{\log\log N}}. \]
	Moreover, we have
	\[ \liminf_{N\to \infty}\frac{\log \| S_N\|_{\Hp\to\Hp}}{\log N/\log\log N}\ge
	\begin{cases}
		\frac{1}{4}|\log p|+O(1), &p\searrow 0 \\
		\frac{1}{2}(1-\log 2)(1-p)+O((1-p)^2),& p\nearrow 1 
	\end{cases}
	\]
	and
	\[ \limsup_{N\to \infty}\frac{\log \| S_N\|_{\Hp\to\Hp}}{\log N/\log\log N}\le 
	\begin{cases}
		\frac{1}{2}|\log p|+O(1), &p\searrow 0 \\
		c (1-p) +O((1-p)^2),& p\nearrow 1,
	\end{cases}
	\]
	where $c$ is an absolute constant. 
\end{theorem}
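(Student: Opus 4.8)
The plan is to handle the upper and lower bounds by completely different mechanisms, and to keep in mind that the two stated asymptotic regimes on the lower side both collapse to the single clean target $\tfrac12\log C(1,p)$ via \eqref{eq:asympp}. I would first dispose of the general upper bound and the case $p\searrow0$ by reducing to an $\Ht$-estimate. Since $0<p<2$ and $\mu_\infty$ is a probability measure, $\|S_N f\|_p\le\|S_N f\|_{\Ht}$, and $\|S_N f\|_{\Ht}^2=\sum_{n\le N}|a_n|^2$ by Parseval. Writing $m:=\lceil 2/p\rceil$, so that $2/m\le p$ and hence $\|f\|_{2/m}\le\|f\|_p$, the Hardy--Littlewood inequality \eqref{eq:multmix2} with the \emph{integer} parameter $\alpha=m$ (for which $\Phi_m=d_m$) gives $\sum_n |a_n|^2/d_m(n)\le\|f\|_{2/m}^2\le\|f\|_p^2$. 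Restricting to $n\le N$ and extracting $\max_{n\le N}d_m(n)$ yields $\|S_N\|_{\Hp\to\Hp}\le(\max_{n\le N}d_m(n))^{1/2}$. The maximal order $\max_{n\le N}\log d_m(n)=(\log m+o(1))\log N/\log\log N$ then provides a valid $\beta_p=\tfrac12\log\lceil2/p\rceil$, and since $\log\lceil2/p\rceil=|\log p|+O(1)$ this already gives $\limsup\le\tfrac12|\log p|+O(1)$. As in the proof of Theorem~\ref{th:growth}, it is essential to pass to the integer $m$: for non-integral $2/p$ the weight $\Phi_{2/p}$ carries the factor $(\tfrac{2/p}{\lfloor 2/p\rfloor})^{\Omega(n)}$, whose maximal order is infinite.

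This bound is useless as $p\nearrow1$ (it tends to $\tfrac12\log 3$ rather than to $0$), so there I would instead interpolate the \emph{operator} $S_N$ between the two endpoints $\mathcal{H}^{p_0}\to\mathcal{H}^{p_0}$ and $\Ho\to\Ho$, with $p_0\in(0,1)$ fixed. The preceding paragraph gives $\|S_N\|_{\mathcal{H}^{p_0}\to\mathcal{H}^{p_0}}\le e^{(\frac12\log\lceil2/p_0\rceil+o(1))\log N/\log\log N}$, while Theorem~\ref{th:norm1} gives $\|S_N\|_{\Ho\to\Ho}\lesssim\log N/\log\log N$. Complex interpolation for the Hardy scale (valid also in the quasi-Banach range) identifies the intermediate space as $\Hp$ with $1/p=(1-\theta)/p_0+\theta$, so that
\[\|S_N\|_{\Hp\to\Hp}\le\|S_N\|_{\mathcal{H}^{p_0}\to\mathcal{H}^{p_0}}^{\,1-\theta}\,\|S_N\|_{\Ho\to\Ho}^{\,\theta}.\]
As $p\nearrow1$ one computes $1-\theta=\tfrac{p_0}{1-p_0}(1-p)(1+o(1))$, and since $\log\|S_N\|_{\Ho\to\Ho}=o(\log N/\log\log N)$ the second factor is negligible on our scale; this yields an upper bound of the form $c(1-p)$ with the absolute constant $c=\tfrac{p_0}{1-p_0}\cdot\tfrac12\log\lceil2/p_0\rceil$ (for instance $c=\log2$ with $p_0=1/2$), as required.

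For the lower bound I would exploit the square-free maximiser of the coefficient functional. Fix the primorial $n=p_1\cdots p_r\le N$ with $r=(1+o(1))\log N/\log\log N$, so that $\mathcal{C}(n,p)=C(1,p)^r$ by Theorem~\ref{le:1}. If $f$ satisfies $a_n(f)=1$ and $a_l(f)=0$ for every other $l\le N$, then $S_N f=n^{-s}$, whence $\|S_N\|_{\Hp\to\Hp}\ge\|n^{-s}\|_p/\|f\|_p=1/\|f\|_p$; the task is to build such an $f$ of small quasinorm. I would start from the multiplicative extremal $\prod_{j=1}^r\phi(z_j)$, where $\phi$ is the one-variable extremal of Theorem~\ref{le:1}(c), normalise so that the resonant coefficient equals $1$, and then multiply by a correction supported on large primes that annihilates the remaining coefficients of index $\le N$ while inflating the $L^p$-quasinorm by no more than $\mathcal{C}(n,p)^{1/2+o(1)}$. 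Since the unconstrained minimal norm is exactly $\mathcal{C}(n,p)^{-1}$, this delivers $\|f\|_p\le\mathcal{C}(n,p)^{-1/2+o(1)}=C(1,p)^{-(1/2+o(1))r}$ and hence $\liminf_{N\to\infty}\log\|S_N\|_{\Hp\to\Hp}/(\log N/\log\log N)\ge\tfrac12\log C(1,p)$. By the asymptotics \eqref{eq:asympp} this single inequality reproduces both stated cases, $\tfrac14|\log p|+O(1)$ as $p\searrow0$ and $\tfrac12(1-\log2)(1-p)+O((1-p)^2)$ as $p\nearrow1$, and in particular produces a positive $\alpha_p$.

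The main obstacle is exactly this last step: producing, for $p<1$, a Dirichlet series whose sole coefficient of index $\le N$ is the resonant one and whose quasinorm is genuinely smaller than that of the monomial $n^{-s}$. This is where the non-convexity of $\|\cdot\|_p$ must be used \emph{constructively}, and it is the source of the unavoidable loss of a square root — the discrepancy between the $\limsup$ governed by $\log\lceil2/p\rceil$ and the $\liminf$ governed by $\tfrac12\log C(1,p)$. Because one-sided lower bounds for $L^p$-quasinorms with $p<1$ are delicate, I expect this annihilation-with-small-norm estimate, together with the optimisation of the size of the primes $p_j$ (which controls both $r$ and the cost of the correction factor), to be the technically hardest part; the upper bound, by contrast, should follow routinely from \eqref{eq:multmix2}, Theorem~\ref{th:norm1}, and interpolation.
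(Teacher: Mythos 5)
Your first step---the global upper bound and the $p\searrow 0$ asymptotics via $\|S_Nf\|_{\Hp}\le\|S_Nf\|_{\Ht}$, the inequality \eqref{eq:multmix2} with the integer parameter $\lceil 2/p\rceil$, and the maximal order of $d_{\lceil 2/p\rceil}$---is exactly the paper's argument, including the reason for rounding up to an integer. The other two parts each contain a genuine gap.

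For the upper bound as $p\nearrow 1$ you invoke ``complex interpolation for the Hardy scale (valid also in the quasi-Banach range)'' to identify $\Hp$ as an interpolation space between $\mathcal{H}^{p_0}$ and $\Ho$ and to interpolate the operator norms of $S_N$. No such theorem is available here. Complex interpolation is not even a well-defined functor on general quasi-Banach couples (the maximum principle behind Calder\'on's construction fails without local convexity), and while substitute results exist for $H^p(\D^d)$ with $d$ fixed, nothing of the kind is known for $H^p(\D^\infty)\cong\Hp$; moreover, any finite-dimensional statement whose constants depend on $d$ would be useless, since $S_N$ acts on polynomials in $\pi(N)\to\infty$ variables. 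The paper avoids interpolation entirely: it dilates the variables using a Bernstein-type estimate (Lemma~\ref{lem:bernstein}, applied once to the first $m\approx\log N/(\log\log N)^3$ variables and then iterated on the remaining ones), applies Helson's bound of Lemma~\ref{lem:partial} to the dilated function, and then returns from the $\Ho$-norm of the dilate to $\|f\|_{\Hp}$ via the Cole--Gamelin estimate \eqref{eq:CGestimate}, Minkowski's inequality, and iterated use of Weissler's inequality (Lemma~\ref{lem:weissler}); this hands-on route is what produces the bound $c(1-p)$. If you wish to keep an interpolation-flavored proof, you would have to prove the interpolation inequality for this specific operator, which is in essence what the paper's dilation argument does by hand.

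For the lower bound, the decisive step---constructing $f$ with $a_n(f)=1$, $a_l(f)=0$ for all other $l\le N$, and $\|f\|_{\Hp}\le\mathcal{C}(n,p)^{-1/2+o(1)}$---is not proved, and the construction you sketch cannot work as described. If $F$ is the Bohr lift of $\prod_{j\le r}\phi(z_j)$ (a function of the first $r$ prime variables) and $g$ is any ``correction'' depending only on larger primes, then for every $l$ composed of the first $r$ primes the coefficient of $Fg$ at $l$ equals $a_l(F)$ times the constant coefficient of $g$; annihilating the unwanted small-prime coefficients (for instance the constant term $\phi(0)^r\neq 0$) forces that constant coefficient to vanish, which kills $a_n$ as well. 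An additive correction in large primes fails for the same reason. The paper needs no annihilation: with $f_M$ the product extremal and $M=p_1\cdots p_k$, the quasi-triangle inequality $C(1,p)^{pk}=\|a_MM^{-s}\|_{\Hp}^p\le\|S_Mf_M\|_{\Hp}^p+\|S_{M-1}f_M\|_{\Hp}^p$ shows that one of two \emph{consecutive} partial sums already has norm at least $2^{-1/p}C(1,p)^k$, and an arbitrary cutoff $N$ is then reached by multiplying $f_{n_J}$ by $x_N^{-s}$ with $x_N\approx N/n_J$, which shifts the partial sum; the requirement $x_N\ge n_J+1$ forces $\log n_J=(1/2+o(1))\log N$ and is the true source of the factor $1/2$ in $\liminf\ge\tfrac12\log C(1,p)$---not a square-root loss in any annihilation cost. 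Your target value $\tfrac12\log C(1,p)$ and the final appeal to \eqref{eq:asympp} are correct, but the mechanism must be replaced by something like this shifting argument.
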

We have made no effort to minimize the constant $c$, but mention that our proof gives the value $\log 2$ times the norm of the operator $f\mapsto f^*$ from $H^1(\D)$ to $L^1(\T)$, where $f^*$ is the radial maximal function of $f$. Comparing with Theorem~\ref{th:growth}, we notice that $\log \| S_N\|_{\Hp\to\Hp}$ has essentially the same maximal order as that of $\log \mathcal{C}(N,p)$.

We will split the proof of Theorem~\ref{thm:smallp} into three parts. We begin with the easiest case, where we apply Lemma~\ref{lem:multmixedineqs} in the same way as the first part of the proof of Theorem~\ref{th:growth}.
\begin{proof}
	[Proof of the upper bound in Theorem~\ref{thm:smallp}, with the asymptotics when $p\searrow 0$] We begin by setting $\alpha:=\lceil 2/p \rceil$ and apply the Hardy--Littlewood inequality from Lemma~\ref{lem:multmixedineqs}:
	\[ \| S_N f \|_{\mathcal{H}^p} \le \| S_N f \|_{\mathcal{H}^2} \le \left(\max_{n\le N} \sqrt{d_{\alpha}(n)}\right)\left(\sum_{n=1}^\infty \frac{|a_n|^2}{d_{\alpha} (n)}\right)^\frac{1}{2} \lesssim \alpha^{\frac{\log{N}}{2\log\log{N}}(1+o(1))}\|f\|_{\mathcal{H}^{2/\alpha}},\]
	where we in the last step used that
	\[d_\alpha(n) \leq \alpha^{\frac{\log{n}}{\log\log{n}}(1+o(1))}\]
	when $n\to\infty$. We conclude by using that $\|f\|_{\mathcal{H}^{2/\alpha}}\leq\|f\|_{\mathcal{H}^p}$, which holds because $2/\alpha\le p$. This argument gives both $\beta_p=\log(1+2/p)$, say, and the desired asymptotic estimate when $p\searrow 0$.
\end{proof}

We need a more elaborate argument to get the right asymptotic behavior when $p\nearrow 1$. We prepare for the proof by first establishing an auxiliary result concerning polynomials on $\T$. Here we use again the notation $f_r(z):=f(rz)$, where $f$ is an analytic function on $\D$ and $r>0$. 
\begin{lemma}\label{lem:bernstein} 
	Suppose that $0<p\le 1$. There exists an absolute constant $C$, independent of $p$, such that if \ $1-r=C^{-1/p}n^{-1}$, then 
	\begin{equation}\label{eq:basic} 
		\|Q\|_{H^p(\D)}^p\le 2\| Q_r\|_{H^p(\D)}^p
	\end{equation}
	for every polynomial $Q(z)=\sum_{k=0}^n c_k z^k$. 
\end{lemma}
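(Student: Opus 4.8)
The plan is to reduce the whole statement to the single scale-invariant inequality
\begin{equation}\label{eq:plan-engine}
\|Q\|_{H^p(\D)}^p \le r^{-np}\,\|Q_r\|_{H^p(\D)}^p,
\end{equation}
valid for \emph{every} $0<r<1$ and every polynomial $Q$ of degree at most $n$. Granting \eqref{eq:plan-engine}, the lemma reduces to choosing the absolute constant $C$ so large that the prefactor satisfies $r^{-np}\le 2$ under the normalization $1-r=C^{-1/p}n^{-1}$.

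To prove \eqref{eq:plan-engine} I would pass to the reciprocal polynomial
\[
\tilde Q(w):=w^nQ(1/w)=\sum_{k=0}^n c_{n-k}\,w^k,
\]
which is again a polynomial of degree at most $n$, hence entire. Writing $w=r^{-1}e^{i\theta}$ (so $1/w=re^{-i\theta}$) gives $|\tilde Q(r^{-1}e^{i\theta})|=r^{-n}|Q(re^{-i\theta})|$, and therefore
\[
\int_\T\bigl|\tilde Q(r^{-1}e^{i\theta})\bigr|^p\,\frac{d\theta}{2\pi}=r^{-np}\,\|Q_r\|_{H^p(\D)}^p,
\qquad
\int_\T\bigl|\tilde Q(e^{i\theta})\bigr|^p\,\frac{d\theta}{2\pi}=\|Q\|_{H^p(\D)}^p.
\]
Since $\tilde Q$ is holomorphic, its integral means are nondecreasing in the radius for every $p>0$ (the classical monotonicity coming from the subharmonicity of $|\tilde Q|^p$); comparing the radii $1/r\ge1$ and $1$ yields \eqref{eq:plan-engine}. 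Equivalently, \eqref{eq:plan-engine} is the sharp $H^p$ Bernstein-type bound asserting that $r\mapsto r^{-n}\bigl(\int_\T|Q(re^{i\theta})|^p\,d\theta/2\pi\bigr)^{1/p}$ is nonincreasing.

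It remains to manage the constant, and this is where the independence of $p$ must be secured. The requirement $r^{-np}\le2$ is exactly $np\log(1/r)\le\log2$, and with $1-r=C^{-1/p}n^{-1}$ together with the elementary estimate $\log(1/r)\le2(1-r)$---valid once $C\ge2$, since then $1-r\le C^{-1}\le 1/2$---this becomes $2p\,C^{-1/p}\le\log2$, i.e.\ $C\ge(2p/\log2)^p$. The crucial observation is that $p\mapsto(2p/\log2)^p$ is \emph{uniformly bounded} on $(0,1]$: its logarithm $p\log(2p/\log2)$ attains its maximum over this interval at the endpoint $p=1$, so $\sup_{0<p\le1}(2p/\log2)^p=2/\log2<3$. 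Hence $C=3$ works simultaneously for all $p\in(0,1]$, which is precisely the assertion of the lemma.

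The main obstacle is exactly this uniformity. The more pedestrian route---estimating $\|Q-Q_r\|_{H^p(\D)}^p$ through the pointwise bound $|Q(z)-Q(rz)|\le(1-r)\,(Q')^{\ast}(z)$, the $L^p$ maximal theorem, and the Bernstein inequality $\|Q'\|_{H^p(\D)}\le n\|Q\|_{H^p(\D)}$---produces an inequality of the correct form, but its constant carries the $L^p$ norm of the radial maximal operator, which blows up as $p\to0$ and so forfeits $p$-independence. The virtue of \eqref{eq:plan-engine} is that it is \emph{constant-free}: the factor $r^{-np}$ cancels the degree $n$ against the scaling $1-r\simeq C^{-1/p}n^{-1}$ exactly, leaving only the harmless quantity $(2p/\log2)^p$ to bound.
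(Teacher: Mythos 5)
Your proof is correct, and it takes a genuinely different route from the paper's. The paper argues perturbatively: it writes $\|Q\|_p^p\le\|Q-Q_r\|_p^p+\|Q_r\|_p^p$, bounds $|Q(z)-Q_r(z)|$ pointwise by $(1-r)\max_{0\le\rho\le1}|Q'(\rho z)|$, invokes the $H^p$-boundedness of the radial maximal function, and then uses Bernstein's inequality $\|Q'\|_{H^p(\D)}\le n\|Q\|_{H^p(\D)}$ for $0<p\le1$ (due to Arestov et al.), ending with $C=2A$ where $A$ is the maximal-function constant. You instead prove the constant-free, scale-invariant inequality $\|Q\|_{H^p(\D)}^p\le r^{-np}\|Q_r\|_{H^p(\D)}^p$ via the reciprocal polynomial $\tilde Q(w)=w^nQ(1/w)$ and Hardy's monotonicity of integral means (subharmonicity of $|\tilde Q|^p$), and then reduce the choice of $C$ to the elementary bound $\sup_{0<p\le1}(2p/\log 2)^p=2/\log 2<3$, so that $C=3$ works. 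Each step checks out: the identity $|\tilde Q(r^{-1}e^{i\theta})|=r^{-n}|Q(re^{-i\theta})|$, the monotonicity of means applied between the radii $1$ and $1/r$, the estimate $\log(1/r)\le 2(1-r)$ for $1-r\le 1/2$, and the convexity argument locating the maximum of $p\mapsto p\log(2p/\log 2)$ at $p=1$. Your route is more elementary and self-contained---no maximal theorem, no citation of the nontrivial $H^p$ Bernstein inequality for $p<1$---and it yields an explicit, clean absolute constant, whereas the paper's constant is inherited from the maximal operator. What the paper's scheme buys in exchange is slightly stronger information (a bound on $\|Q-Q_r\|_p$ itself rather than only a comparison of norms), but for the lemma as stated nothing is lost by your argument.

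One correction to your closing discussion: the ``pedestrian route'' does not in fact forfeit $p$-independence. For $0<p\le1$ the radial maximal function satisfies $\|f^*\|_{L^p(\T)}^p\le A\,\|f\|_{H^p(\D)}^p$ with $A$ absolute (factor $f=Bh$ with $B$ a Blaschke product and $h$ zero-free, note $f^*\le h^*=((h^{p/2})^*)^{2/p}$, and apply the $L^2$ maximal theorem to $h^{p/2}\in H^2$); only the homogeneous form $\|f^*\|_p\le A^{1/p}\|f\|_p$ has a constant that blows up as $p\to0$. The paper uses precisely the $p$-th power form, which is why its $C=2A$ is legitimately absolute. So your approach is preferable on grounds of explicitness and economy, not because the paper's approach fails.
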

\begin{proof}
	For this proof, we write $\| Q\|_p=\|Q\|_{H^p(\D)}$. By the triangle inequality for the $L^p$ quasi-metric, we have 
	\begin{equation}\label{eq:back} 
		\| Q\|_p^p\le \| Q-Q_r \|_p^p+\|Q_r\|_p^p. 
	\end{equation}
	Since
	\[ |Q(z)-Q_r(z)|=\left|\int_{rz}^{z} Q'(w) dw \right|\le (1-r) \max_{0\le \rho\le 1} |Q'(\rho z)|, \]
	we find that
	\[ \| Q-Q_r \|_p^p\le A(1-r)^p \| Q'\|_p^p \]
	for an absolute constant $A$ by the $H^p$ boundedness of the radial maximal function. Using Bernstein's inequality for $0<p \le 1$ \cite{Ari, GL}, we therefore get that
	\[ \| Q-Q_r \|_p^p\le A(1-r)^p n^{p} \|Q\|_p^p.\]
	Returning to \eqref{eq:back}, we see that we get the desired result by setting $C=2A$. 
\end{proof}
\begin{proof}
	[Proof of the upper bound in Theorem~\ref{thm:smallp} when $p\nearrow 1$] Set
	\[ m=m(N):=\left\lfloor\frac{\log N}{(\log\log N)^3}\right\rfloor \]
	and write $z:=(u,v)$ for a point on $\T^\infty$, where $u=(z_1,...,z_m)$ and $v=(z_{m+1}, z_{m+2},...)$, so that $u$ corresponds to the first $m$ primes. Let $\xi$ and $\eta$ be complex numbers and set $\xi u:=(\xi z_1,...\xi z_m)$ and $\eta v=(\eta z_{m+1}, \eta z_{m+2},...)$. Also, if $F$ is a function on $\T^\infty$ and $0<r,\rho \le 1$, we set $F_{r,\rho}(z):=F(ru, \rho v)$. 
	
	We will now apply Lemma~\ref{lem:bernstein} in two different ways. We begin by applying it to the function $\xi \mapsto (\mathcal{B}S_N f)(\xi u, v)$, which is a polynomial of degree at most $\log N/\log 2$. This gives
	\[ \int_{\T} |\mathcal{B}S_N f (\xi u, v)|^pd\mu(\xi) \le 2 \int_{\T} | \mathcal{B}S_N f(r\xi u, v)|^p d\mu(\xi) \]
	for every point $(u,v)$ and hence
	\[ \|\mathcal{B}S_N f \|_p^p\le 2 \| \mathcal{B}S_N f_{r,1}\|_p^p \]
	by Fubini's theorem, with $1-r=C^{-1/p}(\log N/\log 2)^{-1}$. Next, we apply \eqref{eq:basic} to the function $\eta \mapsto (\mathcal{B}S_N f_{r,1})( u, \eta v)$, which is a polynomial of degree at most $(1+o(1)) \log N/\log\log N$. Hence we find that
	\[ \| \mathcal{B}S_N f \|_p^p\le 2^2 \| \mathcal{B}S_N f_{r,\rho}\|_p^p \]
	with $1-\rho=C^{-1/p}(1+o(1))\log\log N/\log N$. Applying \eqref{eq:basic} $k$ times in this way, we therefore get that 
	\begin{equation}\label{eq:start} 
		\| \mathcal{B}S_N f \|_p^p\le 2^{k+1} \| \mathcal{B}S_N f_{r,\rho^k}\|_p^p. 
	\end{equation}
	We choose $k$ such that $\rho^k\le \sqrt{p}$, which is done because our plan is to use Lemma~\ref{lem:weissler} (Weissler's inequality). Since $1-\rho=C^{-1/p}(1+o(1))\log\log N/\log N$, we therefore obtain the requirement that 
	\begin{equation}\label{eq:k} 
		k= \frac{\log p}{2 \log \rho}=|\log p|\cdot\left(1/2+o(1)\right)C^{1/p}\log N/\log \log N. 
	\end{equation}
	
	We now apply Lemma~\ref{lem:partial} to the right-hand side of \eqref{eq:start}, which yields
	\[ \| \mathcal{B}S_N f \|_p\le K(k,p) \| \mathcal{B} f_{r,\rho^k}\|_1, \]
	where
	\begin{equation}\label{eq:K} K(k,p):=A 2^{(k+1)/p} (1-p)^{-1}= A (1-p)^{-1} \exp\left(\left(\frac{\log 2}{2}+o(1)\right) |\log p| p^{-1} C^{1/p} \frac{\log N}{\log\log N}\right); \end{equation}
	here we took into account \eqref{eq:k} to get to the final bound for $K(k,p)$.
	Note that, in view of \eqref{eq:03}, we may assume that $v$ is a vector of length $d:=\pi(N)-m$. It follows that 
	\begin{align*}
		\| \mathcal{B}S_N f \|_p & \le K(k,p) \int_{\T^d} \int_{\T^m} | \mathcal{B} f(ru,\rho^k v) | d\mu_m(u) d\mu_{d}(v) \\
		& \le K(k,p) (1-r^2)^{-m(1-p)/p} \int_{\T^d} \left(\int_{\T^m} | \mathcal{B} f(u,\rho^k v) |^p d\mu_m(u)\right)^{1/p} d\mu_{d}(v), 
	\end{align*}
	where we in the last step used the Cole--Gamelin estimate \eqref{eq:CGestimate}. Using Minkowski's inequality
	\[\left(\int_X\left(\int_Y |g(x,y)|\,dy\right)^r\,dx\right)^\frac{1}{r} \leq \int_Y \left(\int_X |g(x,y)|^r\,dx\right)^\frac{1}{r}\,dy\]
	with $X = \mathbb{T}^d$ and $Y = \mathbb{T}^m$ and $r=1/p>1$, we thus get
	\[ \| \mathcal{B}S_N f \|_p^p \le K(k,p)^p (1-r^2)^{-m(1-p)} \int_{\T^m} \left(\int_{\T^d} | \mathcal{B} f(u,\rho^k v) | d\mu_d(v)\right)^{p} d\mu_{m}(u). \]
	We now iterate Weissler's inequality along with Minkowski's inequality $d$ times in the same way as in the proof of Lemma~\ref{lem:multmixedineqs} and get the bound
	\[ \| \mathcal{B}S_N f \|_p \le K(k,p) (1-r^2)^{-m(1-p)/p} \| f\|_p. \]
	Now taking into account our choice of $r$ and $m$, we find that 
\[ \limsup_{N\to \infty}\frac{\log \| S_N\|_{\Hp\to\Hp}}{\log N/\log\log N}\le 
		\limsup_{N\to \infty} \frac{\log K(k,p)}{\log N/\log\log N}. \]
Using finally \eqref{eq:K}, we conclude that  
\[ \limsup_{N\to \infty}\frac{\log \| S_N\|_{\Hp\to\Hp}}{\log N/\log\log N}\le 
		\frac{C^{1/p}|\log p| \log 2}{2p}, \]
and hence we get the desired asymptotics when $p\nearrow 1$ with $c=(C\log 2)/2$.
\end{proof}
\begin{proof}
	[Proof of the lower bound in Theorem~\ref{thm:smallp}] We consider first the special case when $M$ is the product of the first $k$ prime numbers, $M=p_1\cdots p_k$. By the prime number theorem, we have $k =(1+o(1)) \log{M}/\log\log{M}$. We use then the function
	\[f_M(s):= \prod_{j=1}^k \left(\sqrt{1-p_j/2}+p_j^{-s}\sqrt{p_j/2}\right)^{2/p}.\]
	We recognize each of the factors of this product as the extremal function from Theorem~\ref{le:1}. Hence $\|f_M\|_p=1$ and
	\[ f_M(s)=\sum_{n=1}^\infty a_n n^{-s} \]
	with
	\[ a_M=C(1,p)^k=\left(\sqrt{\frac{2}{p}}\left(1-\frac{p}{2}\right)^{\frac{1}{p}-\frac{1}{2}}\right)^k. \]
	Consequently, by the triangle inequality for the $L^p$ quasi-metric, 
	\begin{equation}\label{eq:bigN} 
		C(1,p)^{pk} \le \| S_{M-1} f_M\|_p^p+\| S_{M} f_M\|_p^p\le 2 \max\left(\| S_{M-1} f_M\|_p^p,\| S_{M} f_M\|_p^p\right), 
	\end{equation}
	and therefore at least one of the quasi-norms $\| S_{M-1} f_M\|_p$ or $\| S_{M} f_M\|_p$ is bounded below by
	\[ 2^{-1/p}C(1,p)^{(1+o(1)) \frac{\log M}{\log\log M}}. \]
	
	Suppose now that an arbitrary $N$ is given. Set $n_j:=p_1\cdots p_j$ and
	\[ J:=\max \{ j: \ N/n_j\ge n_j+1\}. \]
	It follows that $\log n_J=(1/2+o(1))\log N$. There are now two cases to consider: 
	\begin{enumerate}
		\item Suppose $\| S_{n_J} f_{n_J}\|_p$ is large. We set $x_N:=\lfloor N/n_J\rfloor$ and define
		\[ g_N(s):=x_N^{-s}f_{n_J} (s).\]
		Then $(S_N g_N)(s)=x_N^{-s} (S_{n_J}f_{n_J})(s)$ because $x_N=N/n_J-\varepsilon$ for some $0\le \varepsilon <1$, and so
		\[ x_N(n_J+1)=(N/n_J-\varepsilon)(n_J+1)=N+N/n_J-\varepsilon(n_J+1)>N,\]
		where we in the last step used the definition of $J$. 
		\item Suppose $\| S_{n_J-1} f_{n_J}\|_p$ is large. We set $x_N:=\lceil N/n_J \rceil$ and define $g_N$ as in the first case.
		Then $(S_N g_N)(s)=x_N^{-s} (S_{n_J-1}f_{n_J})(s)$ because $x_N=N/n_J+\varepsilon$ for some $0\le \varepsilon <1$, and so
		\[ x_N (n_J-1)=(N/n_J+\varepsilon)(n_J-1)=N-N/n_J+\varepsilon(n_J-1)<N,\]
		where we in the last step again used the definition of $J$. 
	\end{enumerate}
	
	In either case, since \eqref{eq:bigN} holds for $M=n_J$ and $\log n_J=(1/2+o(1))\log N$, we conclude that
	\[ \liminf_{N\to \infty}\frac{\log \| S_N\|_{\Hp\to\Hp}}{\log N/\log\log N}\ge \frac{1}{2} \log C(1,p). \]
	The proof is finished by invoking the asymptotic estimate \eqref{eq:asympp}. 
\end{proof}

Up to the precise values of $\alpha_p$ and $\beta_p$, the problem of estimating $\|S_N\|_{\Hp\to \Hp}$ for $0<p<1$ is solved by Theorem~\ref{thm:smallp}. This result is, however, somewhat deceptive because it is of no help when we need to estimate $\| S_N f\|_{\Hp}$ for functions $f$ of number theoretic interest such as those studied in \cite{BBSSZ}. In fact, in such cases, Lemma~\ref{lem:partial} may give a much better bound. 

\bibliographystyle{amsplain} 
\bibliography{hardyp}

\end{document}